\documentclass[11pt]{article}
\usepackage[margin=1in]{geometry}
\usepackage{amsmath,amssymb,amsthm,mathtools}
\usepackage[T1]{fontenc}
\usepackage{lmodern}
\usepackage{needspace}
\usepackage[colorlinks=true,linkcolor=blue,citecolor=blue,urlcolor=blue]{hyperref}
\newtheorem{theorem}{Theorem}[section]
\newtheorem{lemma}[theorem]{Lemma}
\newtheorem{proposition}[theorem]{Proposition}
\newtheorem{corollary}[theorem]{Corollary}
\newtheorem{conjecture}[theorem]{Conjecture}
\newtheorem{maintheorem}{Theorem}

\newtheorem{goalconjecture}{Conjecture}

\theoremstyle{remark}
\newcommand{\Z}{\mathbb Z}
\newcommand{\Q}{\mathbb Q}
\newcommand{\C}{\mathbb C}
\newcommand{\Fp}{\mathbb F_p}
\newcommand{\Res}{\operatorname{Res}}
\newcommand{\Norm}{\operatorname{Norm}}
\newcommand{\logplus}{\log^+}
\title{Small sums of roots of unity:\\polylogarithmic exceptional sets and conditional separation
  \footnote{AI disclosure: This paper was written with consultation to Large Language Models such as ChatGPT 6 and Claude 5. The author takes full responsibility of the mathematical content.}}
\author{Qi Cheng
  \footnote{School of Computer Science, University of Oklahoma. {\tt Email:qchengnorman@outlook.com}}
   }
\date{October 16, 2026}
\begin{document}
\maketitle
\begin{abstract}
We prove a polylogarithmic exceptional-set estimate for symmetric five-term sums of roots of unity of arbitrary order. More precisely, at most $O((\log(2N))^{11})$ points of the $N$-grid satisfy
$|1+2\cos(2\pi x)+2\cos(2\pi y)|<\exp(-C(\log(2N))^{13})$.
This implies the corresponding bound for small conjugates at the reduced conductor: outside polylogarithmically many real embeddings, every such sum is at least quasipolynomially large. The proof is unconditional and combines a polynomial-vector-field multiplicity estimate, arithmetic jet determinants, analytic propagation, and interpolation determinants for nearby grid points. As a companion result, assuming a uniform number-field $abc$ conjecture, we prove that every sum of a fixed number of distinct prime-order roots of unity has modulus at least $\exp(-\varepsilon p)$ for every $\varepsilon>0$ and all sufficiently large primes $p$, uniformly in the exponents. That proof uses simultaneous compression of the exponents, discriminant estimates, and cyclic resultants.
\end{abstract}

\section{Introduction}
How small can a nonzero sum of a few roots of unity be? For integers $k,N\ge2$, this asks for lower bounds on
\[
 \left|\zeta_N^{a_1}+\cdots+\zeta_N^{a_k}\right|,
 \qquad \zeta_N=\exp(2\pi i/N),
\]
uniformly in the exponents, subject to nonvanishing. Myerson formulated the minimum-modulus problem in \cite{Myerson}. The distinction between exact cancellation and very close cancellation is essential: algebraic relations explain when a sum vanishes, but need not give strong quantitative separation for sums which do not vanish.

An elementary norm argument illustrates the difficulty. If $\alpha$ is a nonzero sum of $k$ $N$th roots, it is an algebraic integer in $\Q(\zeta_N)$, every conjugate has modulus at most $k$, and its nonzero norm has absolute value at least $1$. Consequently
\[
 |\alpha|\ge k^{-(\varphi(N)-1)}.
\]
For fixed $k$, this is exponential in the field degree. The expression defining $\alpha$, however, uses only $k$ exponents of bit length $O(\log N)$. A bound such as
$\exp\{-C_k(\log N)^{B_k}\}$ would be much closer to the size of this sparse description. Even replacing the exponential-in-$p$ bound by $\exp(-o(p))$, uniformly at prime orders, requires additional information.

\subsection{Motivation from numerical separation and identity testing}
The sum-of-square-roots problem provides a useful model for the separation questions studied here. Given positive integers $n_1,\ldots,n_s\le H$ and signs $\epsilon_j\in\{1,-1\}$, one seeks to decide the sign of
\[
 S=\sum_{j=1}^s\epsilon_j\sqrt{n_j}.
\]
Such comparisons arise when comparing lengths of polygonal paths with integer coordinates: each Euclidean segment length is a square root of an integer. Numerical approximation can certify the sign of a nonzero $S$ once the approximation error is smaller than $|S|$, so the required precision is governed by a separation bound.

The elementary algebraic estimate can be very weak. The field generated by the square roots has degree at most $2^s$, and every conjugate of $S$ has absolute value at most $s\sqrt H$. If $S\ne0$, integrality of its norm therefore gives
\[
 |S|\ge(s\sqrt H)^{-(2^s-1)}.
\]
The resulting precision bound is exponential in the number of terms, despite the short description of the input. Improving this dependence is a central issue in the sum-of-square-roots problem. Cheng \cite{ChengSqrt} obtains sharper separation estimates when the radicands are small relative to the number of terms, leading to a subexponential comparison algorithm in that regime.

Short sums of roots of unity exhibit the same tension between a succinct expression and a large ambient field. Here the growing parameter is the order $N$, even when the number of summands is fixed. A lower bound $\exp\{-C(\log N)^B\}$ would permit certification of nonvanishing with polynomially many bits in $\log N$ for a fixed-length sum. This is why the distinction between exponential, subexponential, and quasipolynomial separation is computationally meaningful. The comparison with square roots is motivational; the arithmetic structure of the two families is different.

A second motivation is polynomial identity testing by approximate algebraic evaluation. Chen--Kao \cite{ChenKao} evaluate integer polynomials at suitable algebraic points constructed from signed sums of square roots of distinct primes. Degree restrictions ensure that an identically nonzero polynomial remains nonzero at the exact algebraic point. Their randomized choice of conjugates, together with a bound on exceptionally small values, allows the error probability to be decreased by increasing numerical precision without increasing the number of random bits. A cyclotomic version of this approach evaluates polynomials at roots of unity and asks for analogous control of small conjugates. For the algorithmic problem of deciding exact vanishing at roots of unity, see also Balaji--Perifel--Shirmohammadi--Worrell \cite{CIT}.

The role of an exceptional-set bound in removing randomness is particularly transparent. Suppose that, for a prescribed class of $P\in\Z[X]$, one knows uniformly that
\[
 \#\{t\in(\Z/N\Z)^\times:|P(\zeta_N^t)|<\delta_N\}\le B_N
 \qquad\text{whenever }P(\zeta_N)\ne0.
\]
If $\varphi(N)>B_N$, any $\lfloor B_N\rfloor+1$ distinct admissible indices contain a value of modulus at least $\delta_N$; there is no need to identify the exceptional indices or to choose an index randomly. Approximating each value with certified absolute error at most $\delta_N/4$ and testing whether its computed modulus exceeds $\delta_N/2$ distinguishes the nonzero case from the case in which all exact values vanish. When $B_N$ and $\log(1/\delta_N)$ are polynomial in $\log N$, this requires only polylogarithmically many evaluations and polynomial output precision.

For this to be an identity test rather than merely a test modulo the cyclotomic polynomial, one also needs $P\not\equiv0\Rightarrow P(\zeta_N)\ne0$; the condition $\deg P<\varphi(N)$ is one sufficient guarantee. An algorithmic conclusion further requires effective bounds and efficient construction and certified evaluation of the chosen points in the input representation. Theorem~\ref{thm:D} below supplies the exceptional-set phenomenon for the specific symmetric five-term family (with indices taken modulo sign). Extending it to suitable polynomial classes would provide a route to deterministic numerical identity testing in the spirit of Chen--Kao; the theorem does not by itself give a general derandomization of polynomial identity testing.

Finally, small root-of-unity sums also arise in counting solutions of linear equations whose variables lie in a fixed subset of a finite field. Zhuang--Cheng--Wen \cite{ZCW} relate these counts to Fourier coefficients given by short sums of roots of unity, making separation bounds relevant to the resulting counting errors.

\subsection{Previous work and the five-term case}
Konyagin--Lev \cite{KL} used the distribution of exponential sums to strengthen lower bounds for sums of distinct roots of prime order. Their work adds Fourier information to the norm argument, while leaving the problem of uniform subexponential separation. Habegger \cite[Theorem~7]{Hab} obtained polynomial lower bounds for fixed-length sums simultaneously outside a sparse set of prime orders. This is a strong almost-all-primes result; the problem considered here also includes the exceptional primes.

Constructions of small sums address the opposite side of the question. The cases with at most four terms admit explicit geometric analysis, whereas five terms already exhibit more complicated cancellation. Barber \cite{Barber} proved upper bounds of order $N^{-4/3}$ for five-term sums, with an improvement to $N^{-7/3}$ for infinitely many orders. Kalogirou \cite{Kalogirou} gives further constructions for general numbers of terms. These results indicate the richness of near-cancellation configurations; they do not supply uniform lower bounds.

A particularly natural five-term family is
\[
 1+\zeta_N^a+\zeta_N^{-a}+\zeta_N^b+\zeta_N^{-b}
 =1+2\cos(2\pi a/N)+2\cos(2\pi b/N).
\]
The sum is real, and its small values correspond to rational grid points close to a fixed analytic curve. This geometric description suggests counting the exceptionally small conjugates even when a lower bound at every conjugate is unavailable. In a complementary direction, cyclic resultants and Mahler measure relate products over conjugates to arithmetic growth. Stewart's work on exceptional units and cyclic resultants \cite{Stewart}, and Dimitrov--Habegger's logarithmic equidistribution results for torsion points near atoral sets \cite{DH}, supply relevant arithmetic context.

The present paper develops two conclusions, at different levels of certainty. Unconditionally, Theorem~\ref{thm:D} gives quasipolynomial separation outside polylogarithmically many embeddings for the symmetric five-term family at arbitrary reduced conductor. As a companion, under the uniform number-field $abc$ conjecture, formulated precisely in Section~\ref{sec:abc}, simultaneous compression and a cyclic-resultant estimate give subexponential separation for every fixed-length sum of distinct prime-order roots. The first conclusion bounds an exceptional set unconditionally; the second controls every conjugate under an arithmetic hypothesis.

\subsection{The separation goals, the main result, and a companion result}\label{sec:results}

To fix the targets, we state two goals for symmetric five-term sums at prime orders. For a prime $p>5$, call $(a,b)\in\Fp^2$ admissible if $a,b\ne0$ and $a\ne\pm b$, and put
\[
 \alpha_p(a,b)=1+\zeta_p^a+\zeta_p^{-a}+\zeta_p^b+\zeta_p^{-b},
 \qquad\zeta_p=\exp(2\pi i/p).
\]
The five roots are then distinct, and $\alpha_p(a,b)\ne0$. Indeed, for a prime $p>k$, a sum of $k$ distinct $p$th roots of unity cannot vanish: its exponent polynomial, of degree at most $p-1$ with coefficients $0$ or $1$, could only vanish at a primitive $p$th root by being a multiple of $\Phi_p=1+X+\cdots+X^{p-1}$, and that would require all $p$ terms. Both conjectures below are unconditional assertions, with constants uniform over the admissible pairs.

\begin{goalconjecture}[Uniform quasipolynomial separation]\label{conj:A}
There are absolute constants $C>0$ and $B\ge1$ such that, for every prime $p>5$ and every admissible pair $(a,b)$,
\[
 |\alpha_p(a,b)|\ge \exp\{-C(\log p)^B\}.
\]
\end{goalconjecture}

\begin{goalconjecture}[Uniform subexponential separation]\label{conj:B}
For every $\varepsilon>0$ there is $p_0(\varepsilon)>5$ such that, for every prime $p\ge p_0(\varepsilon)$ and every admissible pair $(a,b)$,
\[
 |\alpha_p(a,b)|\ge \exp(-\varepsilon p).
\]
Equivalently, the minimum over all admissible pairs is at least $\exp(-o(p))$ as $p$ tends to infinity through primes.
\end{goalconjecture}

Conjecture~\ref{conj:A} implies Conjecture~\ref{conj:B}, since $(\log p)^B=o(p)$ for each fixed $B$. Neither is proved in this paper. Our main result gives the separation scale of Conjecture~\ref{conj:A}, with exponent $13$, at every embedding outside a polylogarithmic exceptional set, and it does so unconditionally and at arbitrary conductor.

\Needspace{13\baselineskip}
\begin{maintheorem}\label{thm:D}
There are absolute constants $A,C>0$ with the following property. Let $N\ge3$, let $a,b\in\mathbb Z$ satisfy $\gcd(a,b,N)=1$, and suppose
\[
 \alpha=1+\zeta_N^a+\zeta_N^{-a}+\zeta_N^b+\zeta_N^{-b}\ne0,
 \qquad \zeta_N=e^{2\pi i/N}.
\]
For $K_N=\mathbb Q(\zeta_N)^+$, the number of embeddings
$\sigma:K_N\hookrightarrow\mathbb R$ for which
\[
 |\sigma(\alpha)|<\exp\{-C(\log(2N))^{13}\}
\]
is at most $A(\log(2N))^{11}$.
\end{maintheorem}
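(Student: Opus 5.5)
Our plan is to turn the statement into a counting problem for rational points near an analytic curve. The embeddings of $K_N$ are $\sigma_t:\zeta_N+\zeta_N^{-1}\mapsto 2\cos(2\pi t/N)$ for $t\in(\Z/N\Z)^\times/\{\pm1\}$. Under $\sigma_t$,
\[
 \sigma_t(\alpha)=F(ta/N,\,tb/N),\qquad F(x,y)=1+2\cos(2\pi x)+2\cos(2\pi y).
\]
Since $\gcd(a,b,N)=1$, the map $t\mapsto(ta,tb)\bmod N$ is injective. Hence it suffices to prove the grid statement from the abstract: at most $O((\log 2N)^{11})$ points $(x,y)\in(N^{-1}\Z/\Z)^2$ satisfy $|F(x,y)|<\delta:=\exp(-C(\log 2N)^{13})$, with $F(x,y)\neq0$ where that is needed. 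Points with $F(x,y)=0$ at torsion points are governed by the classification of vanishing sums of five roots of unity (Conway--Jones). For our $t$ these are excluded because $\sigma_t(\alpha)\ne0$ whenever $\alpha\ne0$.

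First we handle geometry and localization. The curve $Z=\{F=0\}$ is a compact real-analytic curve on the torus. Its only singularities are where $\nabla F=0$ with $F=0$, namely points with $\sin 2\pi x=\sin 2\pi y=0$; there $F\in\{5,1,-3\}$, so there are none, and $Z$ is smooth. A point with $|F|<\delta$ lies within $O(\delta)$ of $Z$. We cover $Z$ by $O((\log 2N)^{c})$ short arcs, each parametrized analytically as $y=g(x)$ or $x=h(y)$ with uniform bounds on its Taylor coefficients. It then suffices to show that each arc carries at most $O((\log 2N)^{11-c})$ exceptional grid points.

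Second, on a single arc we use an interpolation-determinant argument of Bombieri--Pila type, in arithmetic form. Take exceptional points $P_1,\dots,P_M$ on the arc and monomials $u^iv^j$ in $u=e^{2\pi i x}$, $v=e^{2\pi i y}$ with $i,j\le L$. The determinant $\Delta=\det\big(u_k^{i}v_k^{j}\big)$ is an algebraic integer in $\Q(\zeta_N)$, and its conjugates are bounded by $(L^2)!$. Each point lies within $\delta$ of the curve $v=\phi(u)$, so analytic propagation, expanding along the arc in jets, shows that $|\Delta|$ is at most $\delta\cdot(\text{size})$ plus a Vandermonde-type factor $r^{\binom{M}{2}\cdot(\ldots)}$ coming from the arc length $r$. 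If $\Delta\neq0$, then $\log|\Delta|\ge -[\Q(\zeta_N):\Q]\cdot O(L^2\log L)$ by the norm bound. That bound is too weak, because the degree is $\varphi(N)$. The fix is to apply the norm not to $\Delta$ but to a jet determinant built only from the $\sigma_t$-images of the single element $\alpha$. The exceptional points are then the conjugates themselves, and the product over all conjugates of the jet determinant is controlled by a resultant of height $\exp(O(\text{polylog}))$ per factor, via the polynomial vector field $X=F_y\partial_x-F_x\partial_y$ tangent to $Z$.

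Third, we need the multiplicity estimate. If $\Delta=0$, the points lie on an auxiliary curve $Q(u,v)=0$ of degree $L$. We must show that such a $Q$ cannot vanish to high order along $Z$ unless it is divisible by the defining polynomial $uv(1+u+u^{-1}+v+v^{-1})$, an irreducible Laurent polynomial. For this we invoke a Nesterenko/Gabrielov-type multiplicity estimate for polynomials along trajectories of the polynomial vector field $X$. It bounds the order of vanishing of $Q|_Z$ by $O(L^2)$. This controls the number of grid points on $\{Q=0\}\cap\{|F|<\delta\}$ through Bézout, giving $O(L^2)$ points per arc.

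Balancing gives the result. We choose $L\asymp(\log 2N)^{a}$ and $M\asymp L^2$. The determinant bound forces $\Delta=0$ once $\log(1/\delta)\gg L^{4}\log N\cdot(\ldots)$, which produces the exponent $13$. Bézout then gives at most $O((\log 2N)^{11})$ exceptional points in total. We expect the main obstacle to be the second step: obtaining an arithmetic lower bound for a nonzero jet/interpolation determinant that depends only polylogarithmically on $N$, rather than on $\varphi(N)$. Our first attempt would use a single Galois orbit of $\alpha$ and the integrality of the norm of an auxiliary polynomial in $\alpha$ of low degree, whose height is controlled by its sparse description.
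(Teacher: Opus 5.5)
Your reduction to a grid count (the embeddings $\sigma_t$, the identity $\sigma_t(\alpha)=F(ta/N,tb/N)$, injectivity of $t\mapsto(ta,tb)$) is the paper's. The core counting argument, however, has the gap you flag yourself: you never get an arithmetic lower bound for a nonzero interpolation determinant that is polylogarithmic in $N$. The gap comes from interpolating with $u^iv^j$, $u=e^{2\pi i x}$, $v=e^{2\pi i y}$, which puts $\Delta$ in $\mathbb Z[\zeta_N]$.

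Your proposed repair does not close it. A bound $\exp(O(\mathrm{polylog}))$ per conjugate, multiplied over $\varphi(N)$ conjugates, still costs $\exp(\varphi(N)\cdot\mathrm{polylog})$ when you isolate one factor, and nothing makes the non-exceptional conjugates small.

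The missing idea is to interpolate in the logarithmic (real) coordinates, with monomials $x^ay^b$, $a+b\le d$, at the lifted grid points in $(N^{-1}\mathbb Z)^2$. Scaling the column of $x^ay^b$ by $N^{a+b}$ makes the matrix integral, so either $\Delta=0$ or $|\Delta|\ge N^{-S_d}$ with $S_d=d(d+1)(d+2)/3$. Take $d\asymp\log N$ and $\nu=\binom{d+2}{2}$ points on a short chart interval. The upper bound $e^{-c\nu^2+O(d\nu+\nu\log\nu)}+\delta e^{O(d\nu+\nu\log\nu)}$ then forces $\Delta=0$.

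The price is that the curve is transcendental in $(x,y)$, and this is where Gabrielov's estimate genuinely enters. Apply it to the degree-$4$ field $\mathcal D$ on $(U,V,z,w)$, together with the fact that no polynomial in $(U,V)$ vanishes on the lift (the monodromy increments are $2\pi i(1,\pm1)$). This gives the jet matrix at the base point full column rank. Its entries lie in $\mathbb Z[\omega]$, a quadratic ring, so nonzero minors have modulus at least $1$. That yields $\sup|P(x,g(x))|\ge\|P\|e^{-Cq_d}$ with $q_d=(d+1)^{11}$. In your $(u,v)$ setting the curve is algebraic and B\'ezout already bounds multiplicities, so invoking Gabrielov there shows the two coordinate systems have been conflated.

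The final count also fails as stated. B\'ezout bounds points of $\{Q=0\}\cap Z$, but exceptional points only lie within $O(\delta)$ of $Z$, not on it. In the paper the argument runs as follows.
\begin{enumerate}
\item $\Delta=0$ yields a real $P$ of degree at most $d$ vanishing at all grid points of the interval, so $|P(x,g(x))|<\varepsilon=2\delta e^{C(d+1)}$ at their abscissae.
\item Jensen's formula applied to $P\pm\varepsilon$ bounds the number of components of this sublevel set by $O(q_d)$.
\item A harmonic-measure (Remez-type) estimate $\sup_J|P(x,g(x))|\ge\|P\|\exp\{-Cq_d(1+\log(1/\ell))\}$ with $\ell=1/N$ shows each component is shorter than $1/N$. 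Hence each contains at most two grid abscissae, and each abscissa carries at most one ordinate because the tube has width less than $1/N$.
\end{enumerate}
This is where the exponents come from: $q_d\asymp(\log N)^{11}$ components, and $q_d\log N\asymp(\log N)^{12}$, which $\delta=\exp\{-C(\log 2N)^{13}\}$ dominates. Your balancing (``$L^4\log N$'', ``B\'ezout gives $L^{11}$'') does not produce these exponents. Finally, the grid theorem counts exact zeros as well, so no Conway--Jones input or nonvanishing hypothesis is needed at that stage.
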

Theorem~\ref{thm:D} is unconditional. In particular it applies to five distinct roots with
$0<a<b<N/2$, but distinctness is not needed for the assertion as stated. For a nonprimitive pair of exponents one first passes to the reduced conductor; counting embeddings in a larger cyclotomic field introduces a multiplicity factor. Section~\ref{sec:Dproof} proves a stronger full-grid estimate and explains this qualification. For prime $N=p>5$ and admissible $(a,b)$, the theorem says that at most $A(\log(2p))^{11}$ of the $(p-1)/2$ conjugates of $\alpha_p(a,b)$ are smaller than $\exp\{-C(\log(2p))^{13}\}$; it leaves those exceptional conjugates uncontrolled.

There are related but different counting results in the literature. Habegger's approximation theorem \cite[Corollary~3]{Hab} gives subpolynomial bounds for rational points sufficiently close to definable sets, with the appropriate algebraic-part exceptions. Binyamini--Novikov prove Wilkie's conjecture for restricted elementary functions \cite{BN}, concerning exact rational points on the transcendental part. Neither statement, by itself, gives the explicit polylogarithmic count in the shrinking neighborhood considered here. The proof in Section~\ref{sec:Dproof} uses the established multiplicity and zero-counting framework of Gabrielov, Comte--Yomdin, and Binyamini \cite{Gabrielov,CY,Binyamini}; its specific ingredients are verified for this curve. We make no general novelty claim for the underlying method.

The companion result controls every conjugate, at the weaker subexponential scale of Conjecture~\ref{conj:B}, under an arithmetic hypothesis.

\begin{theorem}[Conditional subexponential separation]\label{thm:main}
Assume the uniform number-field $abc$ conjecture in the form stated in Section~\ref{sec:abc}. For every fixed integer $k\ge2$ and every $\varepsilon>0$, there is $p_0(k,\varepsilon)$ such that, for every prime $p\ge p_0(k,\varepsilon)$ and every set of pairwise distinct exponents $a_1,\ldots,a_k\in\Fp$,
\[
\left|\sum_{j=1}^k\zeta_p^{a_j}\right|\ge\exp(-\varepsilon p).
\]
The threshold is independent of the exponents. For $k=1$ the modulus is $1$.
\end{theorem}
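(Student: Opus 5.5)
The plan is to follow the route announced in the abstract: simultaneous compression of the exponents, then discriminant estimates, then cyclic resultants, with the uniform number-field $abc$ conjecture of Section~\ref{sec:abc} providing the arithmetic input. Fix $k$ and $\varepsilon$, and write $\alpha=\sum_j\zeta_p^{a_j}$. Since $\alpha\ne0$ for $p>k$, and since multiplying by $\zeta_p^{-a_1}$ does not change $|\alpha|$, we may assume $a_1=0$. The elementary norm bound gives only $|\alpha|\ge k^{-(p-2)}$, so the task is to save a factor $e^{-\varepsilon p}$ in the exponent. Galois conjugation $\sigma_t:\zeta_p\mapsto\zeta_p^t$ lets us replace $\alpha$ by a conjugate whose exponents $ta_j$ are simultaneously small. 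By Dirichlet's simultaneous approximation theorem there is $1\le t\le Q$ with $\|ta_j/p\|\le Q^{-1/(k-1)}$ for all $j$. We choose $Q=p^{1-\eta}$ with $\eta=\eta(k,\varepsilon)$ small. Then there are integers $b_j$ with $|b_j|\le p^{1-\eta/(k-1)}$ such that $\sigma_t(\alpha)=f(\zeta_p)$, where $f(X)=\sum_j X^{b_j}$ is a sparse Laurent polynomial of degree much smaller than $p$.

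The key steps are as follows. \textbf{(1) Relating $|\alpha|$ to a single small value of $f$.} The first option is to write $\alpha=g(\zeta_p)$ with $g=\sum_j X^{a_j}$ and compare $g$ and $f$ via $f=g\circ(X\mapsto X^{t})$ modulo $X^p-1$. More simply, we keep $\alpha$ fixed and note that $f(\zeta_p^{s})=\sigma_{ts}(\alpha)$. The small value $|\alpha|$ is then the value of $f$ at the specific point $\zeta_p^{t^{-1}}$. \textbf{(2) Using the cyclic resultant.} This is $\operatorname{Res}(f^*,\Phi_p)=\prod_s f(\zeta_p^s)$, where $f^*$ is $f$ made into a polynomial of degree $D\le 2p^{1-\eta/(k-1)}$. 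Its logarithm is $\sum_s\log|f(\zeta_p^s)|$. By equidistribution of $\zeta_p^s$ and Mahler-measure theory for $f^*$, this sum is $(p-1)\log M(f^*)+o(p)$. Since the sum includes $\log|\alpha|$, the remaining $p-2$ terms must be controlled from below, and that is not automatic. Step (2) therefore has to be organized so that only a few factors can be small.

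\textbf{(3) Where the $abc$ hypothesis enters.} A very small value $|f(\zeta)|<e^{-\varepsilon p}$ forces $\zeta=\zeta_p^{s}$ to be extremely close to a root $\beta$ of $f^*$. The root separation of $f^*$ is bounded below by the discriminant estimate $|\operatorname{disc} f^*|\ge1$ together with Mahler's bound, which gives separation at least $D^{-O(D)}$. With $D=o(p)$ this is $e^{-o(p)}$. Then $|\zeta-\beta|\approx|f(\zeta)|/|f^{*\prime}(\beta)|$, and a tiny value means $\zeta$ approximates $\beta$ to within $e^{-\varepsilon p}$. Next we apply the uniform number-field $abc$ conjecture in the field $\Q(\zeta_p,\beta)$ to the $k$-term sparse relation. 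We treat $\zeta^{b_1},\dots,\zeta^{b_k}$ as $S$-units (they are units), grouped so that $f(\zeta)$ is a sum of few terms. The conjecture, applied to a vanishing subsum of a three-term relation built from $f(\zeta)$ and two partial sums, bounds the height of $f(\zeta)$ from below relative to its conductor. This yields $-\log|f(\zeta)|\le(1+o(1))[\text{degree}]\cdot h\le \varepsilon p$. For $k>3$ I would induct on $k$ by grouping terms, using compression to keep every partial sum of controlled degree.

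I expect step (3) to be the main obstacle. The difficulty is converting the uniform $abc$ statement in a field of degree $\approx p$ into a bound that saves a factor $o(1)$ in the exponent rather than a constant. This requires the discriminant of $\Q(\zeta_p)$ (about $p^{p}$) to enter with the correct $(1+\epsilon)$ weight. It also requires the compression exponent $\eta$ to make the relevant heights of size $o(p)$. What I would try first is to let $\eta\to0$ slowly with $\varepsilon$, and to check that the $abc$ error term $\epsilon\cdot\log|\operatorname{disc}|/[K:\Q]=\epsilon\log p$ is absorbed, yielding $-\log|\alpha|\le\varepsilon p$ for $p\ge p_0(k,\varepsilon)$ uniformly in the exponents.
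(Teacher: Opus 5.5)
\textbf{There is a genuine gap in steps (2) and (3).} Your compression step is sound and is essentially the paper's pigeonhole lemma. One small correction: Dirichlet with $t\le p^{1-\eta}$ gives $|b_j|\le p^{1-(1-\eta)/(k-1)}$, not $p^{1-\eta/(k-1)}$, but any degree $D=o(p/\log p)$ suffices. The problem is that steps (2) and (3) contain no working mechanism.

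Step (3) fails structurally, not just in its details. Every field $K\supseteq\Q(\zeta_p)$ has root discriminant at least $p^{(p-2)/(p-1)}$. In \eqref{eq:uabc} the term $\log|\Delta_K|/[K:\Q]$ carries weight $1+\eta$, not $\eta$ as your last paragraph assumes. The smallness of $|\alpha|$ enters the normalized height of any triple you build only through the embeddings above $\sigma_{\pm1}$, i.e.\ with weight $O(1/p)$. So this route yields at best an upper bound of order $p\log p$ for $\log(1/|\alpha|)$, which is weaker than the trivial norm bound $(p-2)\log k$. Regrouping terms does not change this. Moreover, partial sums are not units, so their norms would also enter the radical.

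Step (2) is circular. Convergence of $\frac1p\sum_s\log|f(\zeta_p^s)|$ to $m(f^*)$ is exactly the statement that $p$th roots of unity do not approach roots of $f^*$ too closely, which is the problem itself. The root-separation detour is also unnecessary, and $\operatorname{disc}f^*$ can vanish: for example $1+X+X^3+X^4=(1+X)^2(1-X+X^2)$.

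\textbf{The two missing ideas are where to apply the conjecture and how to isolate one value.}

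Where to apply it: the paper never works in $\Q(\zeta_p)$. After compression, $Q(X)=\sum_jX^{b_j}$ is monic with $Q(0)=1$, $M(Q)\le k$, and degree $D=O_k(p^{1-1/(k-1)})$. Take each noncyclotomic irreducible factor $P$ of degree $e$, with root $\rho$; this root is an algebraic unit. Apply Conjecture~\ref{conj:uabc} in $\Q(\rho)$, a field of degree $e\le D=o(p)$, to the unit triple $(\rho^p,1-\rho^p,-1)$.
\begin{itemize}
\item The height is at least $p\,m(P)/e$.
\item The radical is at most $\log C_p(P)/e$, where $C_p(P)=|\Res(P,X^p-1)|$.
\item Mahler's inequality $\log|\Delta_{\Q(\rho)}|\le e\log e+2(e-1)m(P)$ keeps the total discriminant cost at $O(D\log(2D))$.
\end{itemize}
Multiplying over factors (cyclotomic ones contribute integers of modulus at least $1$) gives $\log C_p(Q)\ge p\,m(Q)/(1+\eta)-D\log(2D)-2D\,m(Q)-c_\eta D$.

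How to isolate one value: this is elementary and not circular. For $z^p=1$, the identity $\rho^p-1=(\rho-z)\sum_j\rho^{p-1-j}z^j$ yields $C_p(Q)\le p^D|Q(z)|M(Q)^{p-1}$. So the product of the other values is bounded \emph{above} by $p^DM(Q)^{p-1}$, and you never need to show that only few factors are small. Here $C_p(Q)\ne0$ because $D<p-1$ and $Q(1)=k$.

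Combining the two bounds gives $\log|\alpha|\ge-\frac{\eta}{1+\eta}p\log k-O_{\eta,k}(D\log p)$. Fix $\eta$ with $\eta\log k<\varepsilon/2$ first, and only then take $p$ large.
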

The sum is nonzero for $p>k$ by the cyclotomic-polynomial argument above. The conjectural hypothesis is uniform over number fields of \emph{unbounded degree}. Ordinary $abc$ over $\Q$, or number-field $abc$ with arbitrary field-dependent constants, is not the hypothesis used here. The conclusion is $\exp(-o(p))$ in the precise quantified sense of Theorem~\ref{thm:main}. We do not deduce an explicit exponent $p^{1-\delta}$ or a quasipolynomial lower bound. Distinctness, positive unit coefficients, and prime order are the scope of the theorem as written.

In terms of the goals: Theorem~\ref{thm:main} proves Conjecture~\ref{conj:B} under uniform number-field $abc$, and extends that conditional statement to every fixed number of distinct prime-order terms; it does not prove Conjecture~\ref{conj:A}. Theorem~\ref{thm:D} gives the scale of Conjecture~\ref{conj:A} outside an exceptional set, but says nothing about the exceptional embeddings. Thus neither unconditional conjecture follows from the results of this paper. The remaining goals are to eliminate the exceptional embeddings in Theorem~\ref{thm:D}, or to remove the arithmetic hypothesis in Theorem~\ref{thm:main}.

The paper is organized as follows. Section~\ref{sec:Dproof} proves Theorem~\ref{thm:D}. Sections~\ref{sec:abc}--\ref{sec:fiveterm} prove Theorem~\ref{thm:main} and record a stronger aggregate consequence of the same hypothesis. Section~\ref{sec:limits} discusses limitations and the relation to earlier approaches. The two parts are logically independent and use disjoint notation; the only constants shared between them are the generic $C$ and $c$, whose values may change between occurrences.

\section{An unconditional polylogarithmic exceptional-set bound}\label{sec:Dproof}
Put
\[
 f(x,y)=1+2\cos(2\pi x)+2\cos(2\pi y),\qquad
 X=\{(x,y)\in(\mathbb R/\mathbb Z)^2:f(x,y)=0\}.
\]
All constants in this section are absolute, unless their dependence on a fixed chart is indicated. They may increase between occurrences. The exponents below are deliberately nonoptimal.

\begin{theorem}[Grid form of Theorem~\ref{thm:D}]\label{thm:grid}
There exist constants $A,C>0$ such that, for every integer $N\ge2$,
\[
 \#\left\{(i,j)\in\{0,\ldots,N-1\}^2:
 |f(i/N,j/N)|<\exp\{-C(\log(2N))^{13}\}\right\}
 \le A(\log(2N))^{11}.
\]
The counted set includes points at which $f$ vanishes exactly.
\end{theorem}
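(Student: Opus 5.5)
The grid bound is proved by a covering-and-interpolation argument. We cover the torus by boundedly many charts on which the zero curve $X$ is an analytic graph, and show that the grid points in a thin tube around $X$ lying in any short arc must satisfy a nontrivial polynomial relation of low degree. That relation is then contradicted unless there are few such points. The curve $X$ is compact and real-analytic; its only singular points are those where $\sin(2\pi x)=\sin(2\pi y)=0$ and $f=0$, and the critical values of $f$ are $5,1,-1,-3$, so $X$ is smooth. Hence finitely many charts suffice, each of the form $y=g(x)$ or $x=h(y)$ with $g$ analytic on a fixed complex neighbourhood. Write $\delta=\exp\{-C(\log 2N)^{13}\}$. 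In such a chart, $|f(i/N,j/N)|<\delta$ forces $|j/N-g(i/N)|\le c\delta$ because $|\nabla f|$ is bounded below near $X$. It therefore suffices to bound, for each chart, the number of $i$ for which $i/N$ is $\delta$-close to $g(i/N)$ modulo $1/N$.

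\textbf{Step 1: interpolation determinant.} Let $L\approx(\log 2N)^{11}$ points $P_\nu=(i_\nu/N,j_\nu/N)$ be exceptional and lie in one chart. Choose monomials $x^{s}y^{t}$ with $s,t\le D$, where $D^2\ge L$ and $D\approx(\log 2N)^{5.5}$; we may equivalently use the exponential monomials $e(sx+ty)$, which are better adapted. Form the $L\times L$ determinant $\Delta=\det\bigl(\phi_\mu(P_\nu)\bigr)$.

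\textbf{Step 1a: arithmetic lower bound.} With the exponential monomials, the entries are $N$th roots of unity. Hence $\Delta$ is an algebraic integer in $\mathbb Q(\zeta_N)$ whose conjugates are all at most $L!$. So either $\Delta=0$, or its norm gives $|\Delta|\ge (L!)^{-\varphi(N)}$. That bound is useless, so instead we compare with the real points $Q_\nu=(x_\nu,g(x_\nu))$ on $X$. Perturbing each $P_\nu$ to $Q_\nu$ changes $\Delta$ by at most $L!\,D\delta$.

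\textbf{Step 1b: analytic upper bound.} The determinant at the points $Q_\nu$ is an alternant of the functions $\phi_\mu(x,g(x))$ in one variable. By the standard Bombieri--Pila/Schwarz lemma argument, when the $x_\nu$ lie in an arc of length $r$, this alternant is at most $(Cr)^{cL^{3/2}}e^{CDL}$. This follows by expanding in Taylor series on the fixed complex disc and using that the $\phi_\mu(x,g(x))$ are restrictions of trigonometric polynomials.

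\textbf{Step 2: forcing $\Delta=0$.} Take $r\approx$ fixed small. The gain must therefore come from vanishing order, not from $r$. This is where the multiplicity estimate enters. The functions $\phi_\mu(x,g(x))$ span a space of dimension $L$, and their Wronskian-type jets are controlled because $X$ is an integral curve of the polynomial vector field $\partial_y f\,\partial_x-\partial_x f\,\partial_y$. A Gabrielov / Binyamini multiplicity bound says that no nonzero trigonometric polynomial of bidegree $D$ vanishes on $X$ to order more than $\mathrm{poly}(D)$ at a point, and that it has at most $\mathrm{poly}(D)$ zeros on $X$. This is valid because $X$ is not contained in the zero set of any such polynomial unless the polynomial is a multiple of $f$, and $f$ is irreducible.

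To conclude, we need $|\Delta|$ to be either $0$ or not too small. I would obtain this not from the norm but from the fact that $\Delta$ is a polynomial expression in $\zeta_N^{i_\nu},\zeta_N^{j_\nu}$ of small height, combined with arithmetic jet determinants. Replace the $L$ points by jets at a cluster of nearby grid points. Nearby exceptional grid points $i,i'$ force $(i'-i,j'-j)/N$ to be $\delta$-close to a direction of $X$. Consecutive differences then lie within $\delta N^{O(1)}$ of the tangent line; for $\delta$ tiny this makes them satisfy an exact rational linear relation. Iterating the argument to second order makes them exactly collinear, and then further exact polynomial relations of bounded degree follow. Once the exceptional points in one chart satisfy an exact trigonometric polynomial equation $Q=0$ of degree $\mathrm{poly}(\log N)$ with $Q$ not divisible by $f$, the zero-counting bound of the multiplicity estimate caps the number of points of $X$ near which they can cluster. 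Combined with the polynomial-in-$\delta$ tube control, this gives at most $(\log 2N)^{11}$ points.

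The exponents $11$ and $13$ come from balancing $L$, $D$ and the loss $\log(1/\delta)\gtrsim D\cdot L\cdot\log N$.

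\textbf{Main obstacle.} The hard step is showing that $\Delta$, or the jet determinant, if nonzero, exceeds $\delta^{1/2}$ without paying $\varphi(N)$ in the exponent. In other words, one must replace the trivial norm bound by a Liouville-type bound that depends only on the local data. I would attack it by working with the real numbers $i_\nu/N$ and $j_\nu/N$ directly. The determinant of the monomials $x^sy^t$ evaluated at rationals with denominator $N$ is a rational number with denominator dividing $N^{2DL}$. If it is nonzero it is therefore at least $N^{-2DL}=\exp(-2DL\log N)$, which is quasipolynomial. Using the algebraic charts $x^sy^t$ instead of $e(sx+ty)$ is thus the choice that makes the Liouville step work, and the curve must accordingly be parametrized analytically in these coordinates.
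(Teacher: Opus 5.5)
There are two genuine gaps, even though your last paragraph finds the right arithmetic input: monomials $x^ay^b$ evaluated at points of $N^{-1}\mathbb Z^2$, so a nonzero $\nu\times\nu$ determinant has modulus at least $N^{-S_d}$ with $S_d\asymp d^3$.

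\textbf{Forcing the determinants to vanish.} Your Step~1b exponent $L^{3/2}$ is the exponent for points spread over a planar region. Here the points lie within $\delta$ of a graph $y=g(x)$. After moving them onto the graph, the determinant is a one-variable alternant. Divided differences with Cauchy's formula on a disc of fixed radius $r_0$ then give $\nu!\,C^{\nu}e^{Cd\nu}(\ell_0/r_0)^{\nu(\nu-1)/2}$ for $\nu$ points in an interval of fixed length $\ell_0<r_0$. The exponent is quadratic in $\nu$, and this is exactly why a \emph{fixed} interval suffices. With total degree $d=\lceil\kappa\log(2N)\rceil$ and $\nu=\binom{d+2}{2}$, the bound $e^{-c\nu^2}\le e^{-c'd^4}$ beats $N^{-S_d}=e^{-O(d^3\log N)}$, and the $\delta$-perturbation is negligible.

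With your exponent and $L\asymp D^2$, you would be comparing $e^{-cD^3}$ with $N^{-2D^3}$, which never wins at fixed $r$. That is what pushed you toward ``vanishing order'' and the tangent-line argument, and that argument fails. For two exceptional points at horizontal distance $h\ge 1/N$, the difference vector deviates from the tangent direction by a curvature term of order $h^2$, not by $\delta N^{O(1)}$. So no exact rational relations or collinearity follow. Also, in the paper the arithmetic jets are taken at a fixed algebraic point of $E$, not at clusters of grid points, which do not lie on $E$.

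\textbf{The counting step.} This is where the multiplicity estimate actually enters, and where the exponents $11$ and $13$ come from. Vanishing of all $\nu\times\nu$ determinants gives a real $P$ of degree $\le d$ with $\|P\|=1$, vanishing at the grid points of the chart interval. Those points are only $\delta$-close to $X$, so what you learn is $|P(x,g(x))|<\varepsilon\approx\delta e^{Cd}$ at their abscissae. Bounding the zeros of $P$ on $X$ does not bound how many grid points share one region where $P(x,g(x))$ is small. The paper proves (Lemma~\ref{lem:interval}) two facts:
\begin{itemize}
\item The set $\{|P(x,g(x))|<\varepsilon\}$ has $O(q_d)$ components, with $q_d=(d+1)^{11}$.
\item No component is longer than $1/N$, because $\sup_J|P(x,g(x))|\ge\|P\|\exp\{-Cq_d(1+\log(1/\ell))\}$ on every interval $J$ of length $\ell$.
\end{itemize}
The second fact forces $\varepsilon<\exp(-C(\log 2N)^{12})$, hence $\delta=\exp(-C(\log 2N)^{13})$. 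Each component then contains at most two abscissae in $N^{-1}\mathbb Z$.

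Both facts rest on the non-flatness bound $\sup|P(x,g(x))|\ge\|P\|e^{-Cq_d}$. That bound needs four ingredients, none of which your proposal supplies in usable form:
\begin{itemize}
\item Gabrielov's estimate for a polynomial vector field in the four variables $(U,V,z,w)$. Your field $\partial_yf\,\partial_x-\partial_xf\,\partial_y$ is not polynomial in $x,y$.
\item Lemma~\ref{lem:logind}: no polynomial in the logarithmic coordinates vanishes on the lift of $E$. Divisibility by $f$ is the wrong criterion for the monomials $x^sy^t$.
\item Integrality of the jet minors in $\mathbb Z[\omega]$ at $(-1,e^{\pi i/3})$, which turns the multiplicity bound into \eqref{eq:jetlower}.
\item Propagation by Hadamard and harmonic-measure estimates.
\end{itemize}
In the paper the determinant size is only $\nu\asymp(\log N)^2$; the $(\log N)^{11}$ is the component count $q_d$. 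Your parameters reproduce neither exponent: $D\approx(\log N)^{5.5}$ gives a loss of $DL\log N\approx(\log N)^{17.5}$. At degree $D$, $q_D$ would also be a much larger power of $\log N$.
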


The argument combines a multiplicity estimate of Gabrielov \cite{Gabrielov}, arithmetic lower bounds for jet determinants, analytic propagation, and interpolation determinants. The general connection between multiplicity, Bernstein estimates, and rational-point counting is developed in \cite{CY,Binyamini}. The determinant method originates in work such as \cite{BP}. We give the additional estimates for points \emph{near} $X$, since an exact rational-point theorem alone would not imply Theorem~\ref{thm:grid}.

\subsection{Logarithmic lifts and a uniform jet bound}

Consider the affine curve
\[
 E=\{(z,w)\in(\C^\times)^2:1+z+z^{-1}+w+w^{-1}=0\}.
\]
It is smooth: a singular point would have $z,w\in\{1,-1\}$, whereas the defining expression there is $5$, $1$, or $-3$. It is irreducible. Indeed, its equation as a quadratic in $w$ has discriminant
\[
 (z^2+z+1)^2-4z^2=(z^2-z+1)(z^2+3z+1),
\]
up to the square factor $z^{-2}$; the four roots of the displayed polynomial are distinct, so the discriminant is not a square in $\C(z)$.

\begin{lemma}\label{lem:logind}
No nonzero polynomial in two logarithmic coordinates vanishes identically on a local logarithmic lift of $E$.
\end{lemma}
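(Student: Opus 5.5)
We argue by contradiction. Write $z=e^{u}$, $w=e^{v}$ with $(u,v)$ ranging over a local analytic branch of the lift of $E$, i.e.\ an irreducible germ of the analytic curve
\[
\widetilde E=\{(u,v)\in\C^2: 1+2\cosh u+2\cosh v=0\}.
\]
Suppose a nonzero $Q\in\C[U,V]$ vanishes identically on this germ. The first step is to exploit that $\widetilde E$ is smooth and connected (its image $E$ is smooth and irreducible, and $\exp\times\exp$ is a covering map). Identity-theorem continuation then shows that $Q$ vanishes on the whole connected component of $\widetilde E$ containing the germ. Since the deck group $(2\pi i\Z)^2$ acts on $\widetilde E$, translating $Q$ by deck transformations that preserve this component gives infinitely many polynomial relations on the same component.

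The cleanest route is a monodromy argument. Continue the germ along closed loops $\gamma$ in $E$. Analytic continuation of $(u,v)$ along $\gamma$ returns $(u+2\pi i m_\gamma,\,v+2\pi i n_\gamma)$, where $(m_\gamma,n_\gamma)$ are the winding numbers of $z$ and $w$ around $0$. Hence $Q(U+2\pi i m_\gamma,V+2\pi i n_\gamma)$ also vanishes on the germ for every $\gamma$. We then need two loops whose period vectors $(m,n)$ are $\Q$-linearly independent in $\Z^2$.

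To find them, take a small loop around a puncture of $E$ at $z\to0$. There the defining equation gives $w\sim -z^{-1}$ or $w\sim$ (a root of $1+w+w^{-1}$ perturbed), so the two branches yield periods $(1,-1)$ and $(1,0)$. Both lie in the period lattice of the single irreducible curve $E$, and they are independent.

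Consequently the vanishing ideal $I$ of the germ in $\C[U,V]$ is invariant under translation by a rank-two lattice $\Lambda\subset(2\pi i\Z)^2$. Because the germ is a curve, $I$ is a nonzero prime ideal of height one (it cannot be maximal), so $I=(P)$ with $P$ irreducible. Invariance means $P(U+\lambda)=c_\lambda P$ for all $\lambda\in\Lambda$, and comparing top-degree parts forces $c_\lambda=1$. A polynomial invariant under a rank-two lattice of translations spanning $\C^2$ over $\R$... but here $\Lambda\subset(2\pi i\Z)^2$ spans only $i\R^2$. Restricting to lines $U=U_0+t\lambda_1$ still gives a one-variable polynomial that is periodic, hence constant along each direction $\lambda_1,\lambda_2$. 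Since these directions span $\C^2$ over $\C$, $P$ is constant, a contradiction.

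The main obstacle is the computation of independent periods: verifying that the puncture behaviour of $E$ really produces two $\Q$-independent winding vectors on the same irreducible component. Irreducibility of $E$, established above, is exactly what guarantees that all these loops live on one connected curve, so that monodromy of a single germ sees them all. If the local analysis near $z=0$ is delicate, the fallback is to compute monodromy around a branch point of the discriminant $(z^2-z+1)(z^2+3z+1)$ combined with loops around $z=0$ and $z=\infty$.
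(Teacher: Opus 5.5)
Your strategy is the same as the paper's. You continue the germ of the logarithmic lift around punctures of $E$, obtain two independent period vectors, and conclude that a vanishing polynomial must be trivial. The paper does this above $z=\infty$ and gets increments $2\pi i(1,1)$ and $2\pi i(1,-1)$.

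The local analysis at $z\to0$, which you rightly flag as the crux, is wrong. The equation contains $z^{-1}$, so along $E$ one has $w+w^{-1}=-1-z-z^{-1}\to\infty$ as $z\to0$. Hence $w$ must tend to $0$ or $\infty$. No branch has $w$ approaching a root of $1+w+w^{-1}$; that would be the behaviour of $1+z+w+w^{-1}=0$ instead. Clearing denominators gives $zw^2+(z^2+z+1)w+z=0$, whose two roots near $z=0$ are $w=-z(1+O(z))$ and $w=-z^{-1}(1+O(z))$. The puncture periods there are therefore $(1,1)$ and $(1,-1)$, not $(1,0)$ and $(1,-1)$. As written, the claimed period $(1,0)$ is false for a puncture loop, so your independence step is not established.

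The repair is immediate. $(1,1)$ and $(1,-1)$ have determinant $-2\neq0$, so they are still $\C$-linearly independent. This is exactly the paper's computation transported by $z\mapsto z^{-1}$.

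With that fix, the rest of your argument is valid. The vanishing ideal is a nonzero, non-maximal prime, hence principal $(P)$. It is stable under the monodromy translations, so $P(\cdot+\lambda)=P$. A polynomial periodic along two $\C$-independent directions is constant. This route is longer than needed: the paper just notes that $Q$ vanishes on a translate of the period lattice, which becomes $\Z^2$ after an affine change of coordinates.

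One smaller point. Your parenthetical claim that $\widetilde E$ is connected because $E$ is irreducible and $\exp\times\exp$ is a covering does not follow. For example, the lift of $\{z=w\}$ is the disjoint union of the lines $u=v+2\pi i k$. You never actually use that claim, though. Monodromy of a single germ stays on its own component and only requires $E$ itself to be connected.
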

\begin{proof}
Above $z=\infty$, the two branches of the normalization have
$w=-z(1+o(1))$ and $w=-z^{-1}(1+o(1))$. Small loops about these punctures, joined to a fixed base point by paths in the smooth connected curve, give logarithmic increments
$2\pi i(1,1)$ and $2\pi i(1,-1)$, with a possible simultaneous change of orientation. If a polynomial vanished on a local lift, analytic continuation would make it vanish at every point of a translate of the lattice generated by these two vectors. After an invertible affine change of coordinates it would vanish on $\mathbb Z^2$. Successive application of the fact that a one-variable polynomial has finitely many roots shows that it is the zero polynomial.
\end{proof}

Set $\omega=e^{\pi i/3}$ and take the base point $(z,w)=(-1,\omega)$.
Use centered coordinates
\[
 U=\log(z/(-1)),\qquad V=\log(w/\omega).
\]
On the logarithmic lift the following polynomial vector field is tangent:
\[
 \mathcal D=z(w^2-1)\partial_U-w(z^2-1)\partial_V
       +z^2(w^2-1)\partial_z-w^2(z^2-1)\partial_w .
\]
Indeed it annihilates the defining Laurent polynomial of $E$, and its $U,V$ components are respectively $\mathcal D z/z,\mathcal D w/w$. At
$b=(0,0,-1,\omega)$ the vector field is nonzero. Its degree is $4$.

For $d\ge1$ put
\[
 M_d=\binom{d+2}{2},\qquad
 L_d=512(\max\{d,3\}+9)^8,\qquad q_d=(d+1)^{11}.
\]
Gabrielov's multiplicity estimate \cite[Theorem~2]{Gabrielov}, applied in ambient dimension $4$ to a degree-$4$ vector field, shows that a polynomial of degree at most $d$ which is not identically zero on the trajectory has order of vanishing less than $L_d$. Here one may use degree $\max(d,3)$ in that estimate. By Lemma~\ref{lem:logind}, the matrix
\[
 \mathcal J_d=\bigl(\mathcal D^k(U^iV^j)(b)\bigr)_{
       0\le k\le L_d,\ i+j\le d}
\]
therefore has column rank $M_d$.

All entries belong to $\mathbb Z[\omega]$. A nonzero square minor has absolute value at least $1$: its squared absolute value is its positive integral norm in $\mathbb Q(\omega)$. Also, for an absolute constant $C_1$,
\[
 |\mathcal D^k(U^iV^j)(b)|
 \le H_d:=\{C_1(d+3L_d+1)\}^{L_d}.
\]
To see this, each application of $\mathcal D$ increases degree by at most $3$ and multiplies the coefficient $\ell^1$ norm by at most a constant times the current degree; evaluation at $b$ does not increase that norm. Choose an invertible $M_d$-row minor and apply the cofactor formula to its inverse. For every $Q\in\C[U,V]$ of degree at most $d$,
\begin{equation}\label{eq:jetlower}
 \max_{0\le k\le L_d}|(\mathcal D^kQ)(b)|
 \ge \frac{\|Q\|}{M_d!H_d^{M_d-1}}
 \ge \|Q\|\exp\{-C_2(d+1)^{10}\log(d+2)\},
\end{equation}
where $\|Q\|$ denotes the maximum absolute coefficient.

Let $\gamma(\tau)$ be the holomorphic trajectory through $b$. On a sufficiently small fixed disc in $\tau$, Cauchy's inequalities and
$(Q\circ\gamma)^{(k)}(0)=(\mathcal D^kQ)(b)$ turn
\eqref{eq:jetlower} into a lower bound for $\sup|Q\circ\gamma|$ with the same exponent after enlarging $C_2$. The factors $k!r^{-k}$ cost only $O(L_d\log(L_d+2))$ in the logarithm. Since $\mathcal D U(b)\ne0$, one may also use $U$ as the local parameter.

\subsection{Analytic estimates on real graph charts}

The real gradient of $f$ does not vanish on $X$: its vanishing would again force both cosines to be $\pm1$. By compactness, $X$ has a finite cover by real graph charts, each of the form $y=g(x)$ after possibly interchanging coordinates and choosing real lifts modulo $\mathbb Z$. The functions $g$ extend holomorphically to larger complex discs. We shrink the real intervals and their complex neighborhoods so that every chart has fixed positive margins.

\begin{lemma}\label{lem:bernstein}
For each of these charts there are a real interval $I$, concentric complex discs $D_{\rm in}\subset D_{\rm out}$, with $I\subset D_{\rm in}$ and with fixed positive margins, and a constant $C$, such that
\[
 \sup_{D_{\rm in}}|P(x,g(x))|\ge \|P\|e^{-Cq_d},
 \qquad
 \sup_{D_{\rm out}}|P(x,g(x))|\le \|P\|e^{C(d+1)}
\]
for every polynomial $P\in\C[x,y]$ of degree at most $d$. The inner disc may be replaced by any prescribed smaller fixed disc in the chart, at the cost of changing $C$.
\end{lemma}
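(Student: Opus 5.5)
The upper bound is elementary, so I would dispose of it first. Fix a chart and choose $D_{\rm out}$ so that $x\in D_{\rm out}$ implies $|\operatorname{Im}x|$ and $|\operatorname{Im}g(x)|$ stay bounded by a fixed constant $R$. For a polynomial $P$ of degree at most $d$, the number of monomials is at most $(d+1)^2$, and each monomial $x^iy^j$ evaluated at $(x,g(x))$ is bounded by $(C')^{d}$, where $C'$ bounds $|x|$ and $|g(x)|$ on the compact closure of $D_{\rm out}$. Hence
\[
\sup_{D_{\rm out}}|P(x,g(x))|\le (d+1)^2 (C')^d\|P\| \le \|P\|e^{C(d+1)}.
\]

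For the lower bound I would transfer the jet estimate \eqref{eq:jetlower} from the base point $b$ to the chart. The relation is $z=e^{2\pi i x}$, $w=e^{2\pi i y}$, so the real points of $X$ lift to points of $E$ on the unit torus. In logarithmic coordinates $U=2\pi i x-\pi i$ and $V=2\pi i y-\pi i/3$, up to additive constants fixed by the lift and the branch. Thus a polynomial $P(x,y)$ of degree $\le d$ becomes a polynomial $Q(U,V)$ of degree $\le d$ whose coefficient norm is comparable to $\|P\|$ up to $e^{O(d)}$: this is an affine change of variables with bounded coefficients and a bounded inverse. The local lift of the real curve through the chart and the trajectory $\gamma$ through $b$ are pieces of the same irreducible analytic set, namely the logarithmic lift of $E$. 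The real point corresponding to $b$ is $x=1/2$, $y=1/6$, and $f(1/2,1/6)=1-2+1=0$, so $b$ does lie on the real locus. However, a general chart need not contain it.

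The main step is therefore analytic propagation of the lower bound from a small disc around $b$ (in the parameter $U$) to the chart disc $D_{\rm in}$. I would do this by a chain of overlapping discs along a path in the connected real curve $X$, or rather its relevant component. If a component does not contain a point over $b$, I would use another base point with the same arithmetic property, or exploit the symmetries $(x,y)\mapsto(\pm x+c,\pm y+c')$ and the swap, which permute the components. Along each link of the chain I would apply a two-constants / Hadamard three-circle type inequality to the holomorphic function $h=P(x,g(x))$ on overlapping discs. If $\sup_{D'}|h|\ge m$ on a small disc $D'$ and $\sup_{D''}|h|\le M$ on a larger disc, then on an intermediate disc $\sup|h|\ge m^{\theta}M^{1-\theta}$ for a fixed $\theta$ depending only on the geometry. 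A bounded number of such links, with $M=\|P\|e^{C(d+1)}$ from the upper bound already proved and $m=\|P\|e^{-Cq_d}$ from \eqref{eq:jetlower} (whose exponent $(d+1)^{10}\log(d+2)$ is absorbed into $q_d=(d+1)^{11}$), gives $\sup_{D_{\rm in}}|h|\ge \|P\|e^{-C'q_d}$. Since only a fixed number of steps with fixed $\theta$ occur, the exponent is multiplied by a constant only. The same argument shows that any prescribed smaller fixed disc can replace $D_{\rm in}$.

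I expect the main obstacle to be the bookkeeping of this propagation: making the disc chain uniform and finite over the compact cover, ensuring that the lift branches agree (the monodromy from Lemma~\ref{lem:logind} changes $U,V$ by lattice shifts, which only affect coefficients by $e^{O(d)}$), and handling charts where $x$ rather than $y$ is the graph variable. Within this, I would first check that every real component of $X$ passes through a point equivalent under the torus symmetries to $(1/2,1/6)$. The symmetries $y\mapsto -y$, $y\mapsto y$, $x\mapsto -x$ and the coordinate swap preserve $f$ and map $E$ to itself, sending $b$ to conjugate base points with the same $\mathbb Z[\omega]$-integrality.
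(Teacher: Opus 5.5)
Your proposal is essentially the paper's proof. Both use the affine substitution $x=\tfrac12+U/(2\pi i)$, $y=\tfrac16+V/(2\pi i)$ at cost $e^{O(d)}$, the jet bound \eqref{eq:jetlower} with Cauchy's inequalities on a base disc, the elementary upper bound, and a fixed finite chain of three-circle steps, with lattice shifts and the coordinate swap absorbed as affine substitutions.

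Two small corrections. First, the paper runs the chain along a path in the complex curve $E$, which is connected because it is irreducible. Your check on real components is therefore unnecessary. In any case $X$ is a single oval: $\cos2\pi x+\cos2\pi y=-\tfrac12$ forces $x,y\in[\tfrac16,\tfrac56]$, and the resulting closed curve passes through $(\tfrac12,\tfrac16)$. Second, the propagated lower bound has the form $M(m/M)^{c}$ with a fixed $c\ge1$, not $m^{\theta}M^{1-\theta}$ with $\theta<1$; the latter already fails for constant $h$.
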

\begin{proof}
At the base point substitute
\[
 x=\tfrac12+U/(2\pi i),\qquad y=\tfrac16+V/(2\pi i).
\]
This affine substitution and its inverse change the coefficient norm by factors bounded by $e^{C(d+1)}$. Thus \eqref{eq:jetlower} and Cauchy's inequalities give the asserted lower bound on a fixed base disc, since
$(d+1)^{10}\log(d+2)\le Cq_d$.

Connect the base disc to each target chart by a fixed path in $E$, and continue the logarithmic coordinates along it. The coordinates are bounded on a finite chain of relatively compact discs covering the lifted path, so the upper bound $e^{C(d+1)}\|P\|$ holds throughout the chain. Hadamard's three-circle inequality propagates a lower bound from one disc to an overlapping smaller disc: if the upper bound is $B^\ast$ and the initial lower bound is $b^\ast$, the new lower bound has the form $B^\ast(b^\ast/B^\ast)^{c}$ for a fixed $c\ge1$. Iterating a fixed finite number of times preserves the exponent $Cq_d$. Integer shifts in the terminal logarithmic coordinates merely make a fixed affine substitution in $P$, which again costs $e^{C(d+1)}$. Interchanging the coordinates has no cost. The direct bound for a polynomial on a bounded complex set gives the upper estimate. The same propagation permits further fixed shrinking.
\end{proof}

\begin{lemma}[Intervals and components]\label{lem:interval}
On each fixed chart the following hold, for $d\ge1$.
\begin{enumerate}
\item If $J\subset I$ is a closed interval of length $0<\ell\le1$, then
\[
 \sup_J|P(x,g(x))|
 \ge \|P\|\exp\{-Cq_d(1+\log(1/\ell))\}.
\]
\item If $P$ has real coefficients, $\|P\|=1$, and $0<\varepsilon<1/2$, then
\[
 \{x\in I:|P(x,g(x))|<\varepsilon\}
\]
has at most $Cq_d$ connected components.
\end{enumerate}
\end{lemma}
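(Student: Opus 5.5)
Both parts follow from Lemma~\ref{lem:bernstein} by complex-analytic tools on the one-variable holomorphic function $h(x)=P(x,g(x))$ on $D_{\rm out}$. Normalize $\|P\|=1$. Lemma~\ref{lem:bernstein} gives $\sup_{D_{\rm out}}|h|\le e^{C(d+1)}$ and $\sup_{D_{\rm in}}|h|\ge e^{-Cq_d}$. The key derived quantity is a bound on the number of zeros of $h$ in a slightly larger disc $D'$ with $D_{\rm in}\subset D'\subset D_{\rm out}$, still with fixed margins. Jensen's formula centered at a point of $D_{\rm in}$ where $|h|$ is nearly maximal, or equivalently the standard Bernstein-class zero count, bounds this number by $C\log(\sup_{D_{\rm out}}|h|/\sup_{D_{\rm in}}|h|)\le Cq_d$. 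Recentering Jensen uses the Harnack-type fact that the relevant disc around any point of $D_{\rm in}$ still sits inside $D_{\rm out}$ with a fixed margin.

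For part~1, I would use a Cartan/Remez-type lemma for holomorphic functions with few zeros. Factor $h=p\cdot u$ on $D'$, where $p$ is the monic polynomial of degree $m\le Cq_d$ vanishing at the zeros of $h$ in $D'$, and $u$ is zero-free. Then $\log|u|$ is harmonic, and Harnack's inequality controls its oscillation on $D_{\rm in}$ (which contains $J$) by $C(q_d+d)$, using the bounds above. For the polynomial factor, the classical Remez/Chebyshev inequality compares the sup over $J$ with the sup over a fixed disc: $\sup_J|p|\ge (c\ell)^m\sup_{D_{\rm in}}|p|$. Combining the two factors gives
\[
\sup_J|h|\ge \sup_{D_{\rm in}}|h|\,e^{-Cq_d}(c\ell)^{Cq_d}\ge \exp\{-Cq_d(1+\log(1/\ell))\}.
\]
The main obstacle is the bookkeeping of the $u$-factor: its sup on $D'$ is bounded via the maximum principle only after dividing by $p$ on a circle where $|p|$ is not too small. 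I would choose that circle's radius by a Cartan-type averaging so that $|p|\ge e^{-Cm}$ on it.

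For part~2, $P$ is real and $g$ is real on $I$, so $h$ is real-analytic and real-valued on $I$. The sublevel set $\{|h|<\varepsilon\}$ is a finite union of open intervals, relatively open in $I$. Each component that is not all of $I$ has an endpoint interior to $I$ where $h=\pm\varepsilon$. Distinct components are separated by points where $|h|\ge\varepsilon$, so the count of components is at most $1+{}$the number of solutions in $I$ of $h=\varepsilon$ or $h=-\varepsilon$, up to a factor $2$. It then suffices to bound the zeros of $h\mp\varepsilon$ in $D'$. For these functions the upper bound on $D_{\rm out}$ is still $e^{C(d+1)}+1$. For the lower bound on $D_{\rm in}$, note that $\sup|h\mp\varepsilon|\ge\sup|h|-\varepsilon$, which is at least $e^{-Cq_d}/2$ when $\varepsilon$ is small. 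When $\varepsilon\ge e^{-Cq_d}/2$ this bound is unavailable, and I would instead use the trivial lower bound $\sup|h\mp\varepsilon|\ge|h(x_0)\mp\varepsilon|$ at a point of $D_{\rm in}$. Rather than compare values, I would apply the zero count to the derivative $h'$: by Rolle, zeros of $h\mp\varepsilon$ on $I$ number at most $1+{}$the zeros of $h'$ on $I$. Here $h'=P_x+P_yg'$, and Cauchy's estimates bound it above on a slightly smaller disc. For its lower bound I would again use Lemma~\ref{lem:bernstein}, applied to the polynomial $(\partial_x P)\,\partial_y f+\dots$. Concretely, $g'=-f_x/f_y$, so $f_y h'$ is the restriction to the curve of a polynomial of degree $O(d)$ in the exponential coordinates. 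This gives at most $Cq_{O(d)}\le Cq_d$ zeros, independently of $\varepsilon$, which proves part~2.
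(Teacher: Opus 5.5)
Your part~1 is correct but takes a different route from the paper. You first count the zeros of $h$ near $I$ by Jensen and factor off the monic polynomial $p$ of those zeros. You then treat $p$ by the Chebyshev--Remez inequality $\sup_J|p|\ge(c\ell)^m\sup_{D_{\rm in}}|p|$, and control the zero-free factor by Harnack after a Cartan-type choice of a circle on which $|p|\ge e^{-Cm}$. The paper needs no zero count for part~1. It applies the two-constants principle to $\log|h|$ on a fixed disc slit along $J$, where the harmonic measure of $J$ seen from $D_{\rm in}$ is at least $c/(1+\log(1/\ell))$ because an interval of length $\ell$ has capacity $\ell/4$. Both give $\exp\{-Cq_d(1+\log(1/\ell))\}$; the paper's argument is shorter, and yours is the standard Bernstein-class argument in which valency controls the Remez constant.

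Part~2 has a gap at its last step. You bound the zeros of $h'$ by applying Lemma~\ref{lem:bernstein} to $f_yh'=f_yP_x-f_xP_y$. That lemma concerns only $P\in\C[x,y]$, and $f_x=2\pi i(z-z^{-1})$, $f_y=2\pi i(w-w^{-1})$ are not polynomial in the chart coordinates. In fact $zw\,f_yh'$ is a constant multiple of $\mathcal DP$, a polynomial in $(U,V,z,w)$. A Bernstein estimate for such functions would require redoing the jet-determinant argument on polynomials in all four coordinates. There the full-column-rank statement fails, since every multiple of $zw(1+z+z^{-1}+w+w^{-1})$ vanishes on the trajectory, and Lemma~\ref{lem:logind} gives no independence modulo that ideal. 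So the zero count for $h'$ is unsupported as written.

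The detour is also unnecessary. Since $\varepsilon<1/2$ changes only the constant coefficient, $P\mp\varepsilon$ is itself a polynomial of degree at most $d$ with $\tfrac12\le\|P\mp\varepsilon\|\le\tfrac32$. Lemma~\ref{lem:bernstein} then gives $\sup_{D_{\rm in}}|h\mp\varepsilon|\ge\tfrac12e^{-Cq_d}$ for every $\varepsilon\in(0,1/2)$, and Jensen bounds the zeros of $h\mp\varepsilon$ by $Cq_d$. This is the paper's argument; your case split arose only from estimating through $\sup|h|-\varepsilon$ instead of applying the lemma to the shifted polynomial.

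If you prefer to keep the Rolle route, apply Lemma~\ref{lem:bernstein} to $P-h(x_0)$ for some $x_0\in D_{\rm in}$:
\begin{itemize}
\item Cauchy's estimate gives $\sup|h'|\le C\|P-h(x_0)\|e^{C(d+1)}$ on a disc slightly smaller than $D_{\rm out}$.
\item The inequality $\sup_{D_{\rm in}}|h-h(x_0)|\le C\sup_{D_{\rm in}}|h'|$ gives $\sup_{D_{\rm in}}|h'|\ge c\|P-h(x_0)\|e^{-Cq_d}$.
\item The case $h'\equiv0$ occurs only for constant $P$, by Lemma~\ref{lem:logind}.
\end{itemize}
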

\begin{proof}
Write $h(x)=P(x,g(x))$. For the first assertion use a fixed disc containing $D_{\rm in}$ with closure inside $D_{\rm out}$, and slit it along $J$. The harmonic measure of the slit, at points of $D_{\rm in}\setminus J$, is at least
$c/(1+\log(1/\ell))$. Here is one way to obtain a bound uniform in $J$. Integrate the Green function of the disc against the logarithmic equilibrium probability measure of $J$. On the fixed inner compact set this positive potential is bounded below by a positive constant. On $J$ it is bounded above by $C+\log(1/\ell)$, since the logarithmic capacity of an interval of length $\ell$ is $\ell/4$ and the regular part of the Green function is bounded on the compact set. It vanishes on the outer circle. The maximum principle compares the potential divided by $C+\log(1/\ell)$ with harmonic measure. These Green-function and two-constants principles are standard; see \cite{Ransford}.

Apply the two-constants principle to the subharmonic function $\log|h|$, using its upper bound on the outer disc and its supremum on $J$. At a point of $D_{\rm in}$ where $|h|$ has the lower bound of Lemma~\ref{lem:bernstein}, this gives the assertion. If that point lies on $J$, the assertion follows directly. Zeros are handled by the usual limiting form of the subharmonic maximum principle.

For the second assertion apply Lemma~\ref{lem:bernstein} to $P-\varepsilon$ and $P+\varepsilon$, whose coefficient norms lie between $1/2$ and $3/2$. Jensen's formula bounds their numbers of zeros near $I$ by $Cq_d$. More explicitly, use a point attaining the lower estimate on a sufficiently small fixed disc as the center of two discs of fixed radii: the smaller contains the real interval under consideration and the larger stays in $D_{\rm out}$. Jensen bounds the number by a constant times the logarithm of the ratio of the outer supremum to the value at the center. A fixed subdivision of $I$, if necessary, gives the stated bound on all of $I$. Neither function vanishes identically, by Lemma~\ref{lem:logind}. Each interior boundary point of the sublevel set is a zero of one of these functions; hence the number of components is at most their total number of real zeros plus one.
\end{proof}

\subsection{An interpolation determinant for nearby grid points}

Compactness and the nonvanishing gradient imply that, for all sufficiently small $\delta>0$, the set $|f|<\delta$ is covered by finitely many of the preceding graph neighborhoods, and in each neighborhood
\begin{equation}\label{eq:tube}
 |y-g(x)|\le C\delta.
\end{equation}
Coordinates are real lifts, possibly interchanged. Lifting a grid point preserves the property that both coordinates belong to $N^{-1}\mathbb Z$. Subdivide the real chart intervals into a fixed finite number of intervals of length at most $\ell_0$, chosen below.

Fix $d\ge1$ and let $\nu=M_d$. For any $\nu$ grid points in one such interval form the evaluation determinant $\Delta$ of the monomials $x^ay^b$, $a+b\le d$. Put
\[
 S_d=\sum_{a+b\le d}(a+b)=\frac{d(d+1)(d+2)}3 .
\]
Then
\begin{equation}\label{eq:denominator}
 \Delta=0\quad\hbox{or}\quad |\Delta|\ge N^{-S_d}.
\end{equation}
Indeed multiplying the column for $x^ay^b$ by $N^{a+b}$ gives an integer matrix.

Move each point vertically onto the graph and denote the resulting determinant by $\Delta_X$. The monomials restricted to the graph are bounded by $\Theta^{d+1}$ on a fixed complex neighborhood, for a fixed $\Theta\ge1$. Newton divided differences in the $x$ coordinates give
\[
 |\Delta_X|
 \le \nu!\, C^{\nu}\, \Theta^{(d+1)\nu}
       (\ell_0/r_0)^{\nu(\nu-1)/2},
\]
where $r_0>0$ is a fixed radius. To justify this estimate, the row operations extract the Vandermonde product, bounded by $\ell_0^{\nu(\nu-1)/2}$; Cauchy's integral formula on a fixed surrounding contour of radius $r_0$ bounds a divided difference of order $m$ by $C\Theta^{d+1}r_0^{-m}$. The formula extends to repeated nodes by continuity. Choose $\ell_0<r_0$ sufficiently small. Thus
\[
 |\Delta_X|\le \exp\{-c\nu^2+Cd\nu+C\nu\log(\nu+1)\}.
\]
On the same compact neighborhood the monomials and their first derivatives are bounded by $e^{C(d+1)}$. Multilinearity of the determinant and \eqref{eq:tube} consequently give
\begin{equation}\label{eq:neardet}
 |\Delta|
 \le e^{-c\nu^2+Cd\nu+C\nu\log(\nu+1)}
       +\delta e^{Cd\nu+C\nu\log(\nu+1)}.
\end{equation}
The constants are uniform over our finite chart cover.

\begin{proof}[Proof of Theorem~\ref{thm:grid}]
Write $L=\log(2N)$ and choose $d=\lceil\kappa L\rceil$, with a sufficiently large fixed $\kappa$. Since $\nu\asymp d^2$ and $S_d\asymp d^3$, the first term of \eqref{eq:neardet} is less than $\tfrac14N^{-S_d}$ for all sufficiently large $N$. Take
$\delta=\exp(-C_0L^{13})$. For a sufficiently large fixed $C_0$, the second term is also less than $\tfrac14N^{-S_d}$. By \eqref{eq:denominator}, every $\nu$-row evaluation determinant in a chart interval is zero.

If the interval contains fewer than $\nu$ points, it contributes $O(L^2)$ and there is nothing to prove. Otherwise its evaluation matrix has rank less than $\nu$, so there is a nonzero real polynomial $P$ of degree at most $d$ vanishing at all its grid points; the kernel vector can be taken real because the matrix is real. Normalize $\|P\|=1$. By \eqref{eq:tube} and the derivative bound,
\[
 |P(x,g(x))|<\varepsilon:=2\delta e^{C(d+1)}
\]
at their abscissae. For large $N$, $\varepsilon<1/2$. Lemma~\ref{lem:interval} bounds the number of components of this sublevel set by $Cq_d=O(L^{11})$.

No such component can have length greater than $1/N$. If it did, it would contain a closed interval $J$ of length $1/N$, and Lemma~\ref{lem:interval} would give
\[
 \sup_J|P(x,g(x))|
 \ge \exp\{-Cq_d(1+\log N)\}\ge \exp(-C'L^{12}),
\]
contradicting $\sup_J|P(x,g(x))|\le\varepsilon$ for large $N$.
Each component therefore contains at most two elements of $N^{-1}\mathbb Z$. For each abscissa there is at most one grid ordinate satisfying \eqref{eq:tube}, since $2C\delta<1/N$. This proves an $O(L^{11})$ bound in the chart interval. Sum over the fixed finite cover, then enlarge $A$ to include the finitely many remaining $N$. This proves the theorem.
\end{proof}

\subsection{Conjugates, reduced conductors, and nonvanishing}

\begin{proof}[Proof of Theorem~\ref{thm:D}]
Let $N\ge3$, $\gcd(a,b,N)=1$, and
$\alpha=1+\zeta_N^a+\zeta_N^{-a}+\zeta_N^b+\zeta_N^{-b}$.
For $t\in(\mathbb Z/N\mathbb Z)^\times$,
\[
 \sigma_t(\alpha)=f(ta/N,tb/N).
\]
The map $t\mapsto(ta,tb)\pmod N$ is injective: if both differences vanish, an integral linear combination of $a,b,N$ equal to $1$ shows that $t-t'\equiv0\pmod N$. The grid bound therefore bounds the number of such $t$, and hence also the number of classes modulo $t\sim-t$. These classes are precisely the embeddings of $\mathbb Q(\zeta_N)^+$ into $\mathbb R$. If $\alpha\ne0$, none of its conjugates is zero. Theorem~\ref{thm:grid} gives the assertion.
\end{proof}

For arbitrary exponents put $g_0=\gcd(a,b,N)$, $N_0=N/g_0$, and
$(a',b')=(a/g_0,b/g_0)$. Then $\alpha_N(a,b)=\alpha_{N_0}(a',b')$, and the preceding bound applies at the reduced conductor $N_0$ when $N_0\ge3$. If embeddings of the larger real cyclotomic field are counted instead, each reduced-conductor embedding occurs $\varphi(N)/\varphi(N_0)$ times (for $N_0,N\ge3$); this multiplicity must not be discarded. Conductors $1$ and $2$ give integer sums and can be treated directly.

The nonzero hypothesis is natural for the conjugate formulation. It cannot be omitted uniformly for all conductors: the five fifth roots have sum zero. For prime $N>5$ distinctness ensures nonvanishing by the cyclotomic-polynomial argument in Section~\ref{sec:results}. For $N$ a power of $2$, a sum of five roots cannot vanish: reduction modulo a prime above $2$ sends every $N$th root to $1$, and hence sends the sum to $1$. The proof of the grid theorem itself does not require a nonvanishing hypothesis or a prime-power conductor.

Theorem~\ref{thm:D} concerns an exceptional set of embeddings, not a lower bound at every embedding. It therefore does not supply an unconditional replacement for Theorem~\ref{thm:main}, to which we now turn. The special five-term symmetric expression enters through the fixed curve $E$, its logarithmic monodromy, and the two-dimensional grid determinant. No assertion for arbitrary fixed-length sums is made here.

\section{Heights, radicals, and the uniform hypothesis}\label{sec:abc}
The remainder of the paper proves Theorem~\ref{thm:main}. The argument is arithmetic rather than analytic, and the notation of this and the following sections is independent of Section~\ref{sec:Dproof}.

For a number field $K$ of degree $e$, write $\Delta_K$ for its absolute field discriminant. For a nonzero triple $a,b,c\in K$, use the absolute logarithmic projective height
\[
h(a:b:c)=\frac1e\sum_w [K_w:\Q_w]\log\max\{|a|_w,|b|_w,|c|_w\},
\]
with the usual absolute values and local-degree normalization; equivalently one may sum over prime ideals and all complex embeddings, counting conjugate complex embeddings separately. The product formula makes the height invariant under scaling. Define the finite radical
\[
r_K(a:b:c)=\frac1e\sum_{\mathfrak p\in\mathcal S(a,b,c)}\log\Norm\mathfrak p,
\]
where $\mathcal S(a,b,c)$ consists of the nonzero prime ideals at which the valuations of $a,b,c$ are not all equal.
\begin{conjecture}[Uniform number-field $abc$]\label{conj:uabc}
For every $\eta>0$ there exists $c_\eta\ge0$, independent of $K$, such that for every number field $K$ and every $a,b,c\in K^\times$ with $a+b+c=0$,
\begin{equation}\label{eq:uabc}
h(a:b:c)\le(1+\eta)\left(r_K(a:b:c)+\frac{\log|\Delta_K|}{[K:\Q]}\right)+c_\eta.
\end{equation}
\end{conjecture}
This is the logarithmic form of Granville--Stark's formulation
$H(a,b,c)\ll_\eta(\operatorname{rd}_K N_K(a,b,c))^{1+\eta}$,
where $\operatorname{rd}_K=|\Delta_K|^{1/e}$ and $\log N_K=r_K$ \cite{GS}. Their discussion distinguishes unbounded-degree uniformity from the bounded-degree version supplied by the usual formulation of Vojta's general conjecture. Below, the conjecture is applied in fields whose degree grows with $p$, so a bounded-degree version would not suffice. The unspecified dependence of $c_\eta$ as $\eta\downarrow0$ matters in the final limiting argument.

For a monic integer polynomial $Q$ with roots $\rho_1,\ldots,\rho_D$, counted with multiplicity, put
\[
M(Q)=\prod_{i=1}^D\max\{1,|\rho_i|\},\qquad m(Q)=\log M(Q).
\]
If $P$ is monic irreducible of degree $e$ and $\rho$ is one of its roots, then $h(\rho)=m(P)/e$. Mahler measure is multiplicative, and
$M(Q)\le\sum_j|q_j|$ for $Q(X)=\sum_jq_jX^j$: integrate $\log|Q|$ on the unit circle and bound it by the logarithm of the coefficient length. In particular, a sum of $k$ distinct monomials has $m(Q)\le\log k$.

\section{A discriminant estimate with uniform constants}
\begin{lemma}\label{lem:discriminant}
Let $P\in\Z[X]$ be monic irreducible of degree $e$, let $\rho$ be a root, and let $K=\Q(\rho)$. Then
\begin{equation}\label{eq:discriminant}
\log|\Delta_K|\le e\log e+2(e-1)m(P).
\end{equation}
For $e=1$ both sides are $0$.
\end{lemma}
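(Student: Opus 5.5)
The plan is to compare the field discriminant with the polynomial discriminant and then bound the latter by Hadamard's inequality. Since $\rho$ is an algebraic integer and $\Z[\rho]\subseteq\mathcal O_K$ has finite index, we have $\operatorname{disc}(P)=[\mathcal O_K:\Z[\rho]]^2\,\Delta_K$. Because $\operatorname{disc}(P)\ne0$ by irreducibility, this gives $|\Delta_K|\le|\operatorname{disc}(P)|$, and it suffices to prove
\[
\log|\operatorname{disc}(P)|\le e\log e+2(e-1)m(P).
\]

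Let $\rho_1,\dots,\rho_e$ be the roots of $P$. Write $\operatorname{disc}(P)=\det(V)^2$, where $V=(\rho_i^{\,j-1})_{1\le i,j\le e}$ is the Vandermonde matrix. Scale the $i$th row by $\max\{1,|\rho_i|\}^{-(e-1)}$. Every entry of the scaled matrix then has modulus at most $1$: for $|\rho_i|\le1$ this is clear, and for $|\rho_i|>1$ the entries are $|\rho_i|^{j-1-(e-1)}\le1$. Each scaled row therefore has Euclidean norm at most $\sqrt e$. Hadamard's inequality bounds the scaled determinant by $e^{e/2}$, so
\[
|\det V|\le e^{e/2}\prod_i\max\{1,|\rho_i|\}^{e-1}=e^{e/2}M(P)^{e-1}.
\]
Squaring and taking logarithms gives $\log|\operatorname{disc}(P)|\le e\log e+2(e-1)m(P)$, which is the claimed inequality.

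For $e=1$ we have $K=\Q$ and $\Delta_K=1$, while the right side is $1\cdot\log1+0=0$, so both sides vanish. No step here presents a real obstacle. The only point needing care is the direction of the index relation, namely that the polynomial discriminant is a square multiple of the field discriminant. This is standard because $\Z[\rho]$ is a full-rank sublattice of $\mathcal O_K$, and it makes the bound uniform in the constants as the lemma's title requires.
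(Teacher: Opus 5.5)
Your proposal is correct and follows the paper's proof essentially line for line. You use the index relation $|\operatorname{disc}P|=|\Delta_K|[\mathcal O_K:\Z[\rho]]^2$ to pass to the polynomial discriminant, and then Hadamard's inequality on the Vandermonde rows (each of Euclidean norm at most $\sqrt e\max(1,|\rho_i|)^{e-1}$, which your row-scaling simply makes explicit) to get $|\operatorname{disc}P|\le e^eM(P)^{2e-2}$.
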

\begin{proof}
The polynomial discriminant satisfies
$|\operatorname{disc}P|=|\Delta_K|[\mathcal O_K:\Z[\rho]]^2$, so it suffices to bound $|\operatorname{disc}P|$. The Vandermonde matrix of the roots $\rho_i$ has row
$(1,\rho_i,\ldots,\rho_i^{e-1})$ of Euclidean norm at most
$\sqrt e\max(1,|\rho_i|)^{e-1}$. Hadamard's determinant inequality gives
\[
|\operatorname{disc}P|\le e^e M(P)^{2e-2}.
\]
Taking logarithms proves the claim; this is Mahler's classical discriminant inequality \cite{Mahler}.
\end{proof}

\section{Uniform \texorpdfstring{$abc$}{abc} and cyclic resultants}
Define, when nonzero,
\[
C_n(Q)=|\Res(Q,X^n-1)|=\prod_{Q(\rho)=0}|\rho^n-1|.
\]
The roots in every product or sum below are counted with multiplicity.

\begin{lemma}[An irreducible factor]\label{lem:factor}
Assume Conjecture~\ref{conj:uabc}. Let $P\in\Z[X]$ be monic irreducible with $P(0)=\pm1$, degree $e$, and $\mu=m(P)$. Suppose that $P$ is not cyclotomic. For every integer $n\ge1$,
\begin{equation}\label{eq:factor}
\log C_n(P)\ge\frac{n\mu}{1+\eta}-\log|\Delta_{\Q(\rho)}|-\frac{c_\eta e}{1+\eta}.
\end{equation}
\end{lemma}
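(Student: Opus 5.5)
The plan is to apply Conjecture~\ref{conj:uabc} once, in the field $K=\Q(\rho)$ of degree $e$, where $\rho$ is a root of $P$. We use the unit equation
\[
 a=\rho^n,\qquad b=-1,\qquad c=1-\rho^n,\qquad a+b+c=0 .
\]
First we check that the triple is admissible. Since $P$ is irreducible and not cyclotomic, $\rho$ is not a root of unity. Hence $\rho^n\ne1$ and $c\ne0$, and clearly $a,b\ne0$. The same remark shows that $C_n(P)\ne0$, so $\log C_n(P)$ is defined.

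Second, we bound the height from below. Dropping a coordinate cannot increase the projective height, since each local maximum only grows when a coordinate is added. Therefore
\[
 h(a:b:c)\ge h(\rho^n:1)=h(\rho^n)=n\,h(\rho)=\frac{n\mu}{e},
\]
using $h(\rho)=m(P)/e$ from Section~\ref{sec:abc}.

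Third, we bound the radical from above. Because $P$ is monic with $P(0)=\pm1$, $\rho$ is a unit of $\mathcal O_K$, and so is $\rho^n$. At every prime $\mathfrak p$, the valuations of $a=\rho^n$ and $b=-1$ are therefore both $0$. So $\mathfrak p\in\mathcal S(a,b,c)$ only when $v_{\mathfrak p}(1-\rho^n)>0$, i.e.\ when $\mathfrak p$ divides the nonzero integral ideal $(\rho^n-1)$. Hence
\[
 e\,r_K(a:b:c)\le\sum_{\mathfrak p\mid(\rho^n-1)}\log\Norm\mathfrak p\le\log|N_{K/\Q}(\rho^n-1)|.
\]
Since $P$ is irreducible, its roots are exactly the $e$ conjugates of $\rho$, so $|N_{K/\Q}(\rho^n-1)|=\prod_{P(\rho_i)=0}|\rho_i^n-1|=C_n(P)$.

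Finally we insert these two bounds into \eqref{eq:uabc} and obtain
\[
 \frac{n\mu}{e}\le(1+\eta)\left(\frac{\log C_n(P)}{e}+\frac{\log|\Delta_K|}{e}\right)+c_\eta .
\]
Multiplying by $e/(1+\eta)$ and rearranging gives \eqref{eq:factor}. Because $c_\eta$ is independent of $K$, the error term is exactly $c_\eta e/(1+\eta)$, uniformly in the degree.

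There is no serious obstacle here. The only points needing care are that the radical counts only primes dividing $\rho^n-1$, which uses the unit hypothesis $P(0)=\pm1$, and that irreducibility lets us identify the field norm with the cyclic resultant. In particular, no multiplicity or index issues arise from $\Z[\rho]\ne\mathcal O_K$, because the discriminant term enters only through $\Delta_K$ as in the conjecture.
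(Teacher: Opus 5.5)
Your proposal is correct and matches the paper's proof: you use the same unit equation (the paper orders it as $(\rho^n,1-\rho^n,-1)$), the unit hypothesis confines the radical to primes dividing $1-\rho^n$ with total contribution at most $\log C_n(P)$, and the height is bounded below by $n\mu/e$ before rearranging \eqref{eq:uabc}. The one small difference is that you get the height bound by dropping a coordinate and using $h(\rho^n)=n\,h(\rho)$, whereas the paper computes the archimedean contribution directly after noting the finite places contribute nothing; the two arguments are equivalent.
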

\begin{proof}
Every root $\rho$ of $P$ is an algebraic unit: it is integral and its inverse is integral because $P(0)=\pm1$. Since $P$ is noncyclotomic, $\rho^n\ne1$. Apply~\eqref{eq:uabc} over $K=\Q(\rho)$ to
\[
(a,b,c)=(\rho^n,1-\rho^n,-1).
\]
At finite places all three entries are integral and the first and third are units. Thus the finite contribution to the projective height is zero, and at the archimedean places
\[
h(\rho^n:1-\rho^n:-1)\ge\frac1e\sum_{\sigma:K\hookrightarrow\C}\log\max(1,|\sigma(\rho)|^n)=\frac{n\mu}{e}.
\]
The radical is supported exactly on the prime ideals dividing $1-\rho^n$. Each such ideal has positive integral valuation there. Consequently
\[
e\,r_K(\rho^n:1-\rho^n:-1)
\le\log|\Norm_{K/\Q}(1-\rho^n)|=\log C_n(P).
\]
Multiplying~\eqref{eq:uabc} by $e$ and rearranging proves~\eqref{eq:factor}. The norm need not be squarefree; bounding its radical by its full absolute norm is sufficient.
\end{proof}

\begin{proposition}[Uniform polynomial estimate]\label{prop:resultant}
Assume Conjecture~\ref{conj:uabc}. Let $Q\in\Z[X]$ be monic, have $Q(0)=\pm1$, degree $D\ge1$, and $m=m(Q)$. If $C_n(Q)\ne0$, then
\begin{equation}\label{eq:resultant}
\log C_n(Q)\ge\frac{nm}{1+\eta}
-D\log(2D)-2Dm-c_\eta D.
\end{equation}
\end{proposition}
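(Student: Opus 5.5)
Factor $Q$ over $\Z$ into monic irreducible factors, $Q=\prod_i P_i$, counted with multiplicity. Since $Q$ is monic with $Q(0)=\pm1$, each $P_i$ can be taken monic in $\Z[X]$, and $\prod_i P_i(0)=\pm1$ forces every $P_i(0)=\pm1$. The hypothesis $C_n(Q)\ne0$ means no root of $Q$ is an $n$th root of unity. Cyclic resultants are multiplicative, $C_n(Q)=\prod_i C_n(P_i)$, and so are Mahler measure and degree, $m=\sum_i m(P_i)$ and $D=\sum_i e_i$ with $e_i=\deg P_i$. The plan is therefore to prove a lower bound for each factor and add the bounds.

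For a noncyclotomic factor $P_i$ with $\mu_i=m(P_i)$, apply Lemma~\ref{lem:factor}, and then bound the discriminant by Lemma~\ref{lem:discriminant}:
\[
\log C_n(P_i)\ge\frac{n\mu_i}{1+\eta}-e_i\log e_i-2(e_i-1)\mu_i-\frac{c_\eta e_i}{1+\eta}
\ge\frac{n\mu_i}{1+\eta}-e_i\log(2D)-2e_i\mu_i-c_\eta e_i .
\]
Here we use $e_i\le D$ and $c_\eta\ge0$.

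For a cyclotomic factor $P_i=\Phi_r$, we have $\mu_i=0$, and $r\nmid n$ because $C_n\ne0$. It then suffices to show $\log C_n(\Phi_r)\ge -e_i\log(2D)$. This is the only step that is not immediate from the earlier lemmas, and it is where I expect the work to be. My first choice is integrality: $C_n(\Phi_r)=|\Norm(1-\zeta_r^n)|$ is the norm of a nonzero algebraic integer, so it is a positive integer and $\log C_n(\Phi_r)\ge0$.

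Summing the two cases gives
\[
\log C_n(Q)\ge\frac{nm}{1+\eta}-D\log(2D)-2m\sum_i e_i\mathbf 1[\mu_i>0]-c_\eta D .
\]
To reach the stated term $-2Dm$, bound $2e_i\mu_i\le 2D\mu_i$ and sum over $i$, which gives at most $2Dm$. A sharper bound is not needed. The remaining bookkeeping checks that the constants $c_\eta e_i/(1+\eta)\le c_\eta e_i$ sum to at most $c_\eta D$, and that the $e_i\log e_i$ terms sum to at most $D\log(2D)$.
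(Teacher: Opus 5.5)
Your proposal is correct and is essentially the paper's proof: factor $Q$ into monic irreducibles (each with constant term $\pm1$), apply Lemma~\ref{lem:factor} together with Lemma~\ref{lem:discriminant} to the noncyclotomic factors, note that cyclotomic factors have nonzero integral resultants and so contribute nonnegatively, and sum using multiplicativity of $C_n$, $m$, and degree. Your intermediate display with the indicator $\mathbf 1[\mu_i>0]$ is valid, since each $\mu_i\le m$, but it is not needed, because $\sum_i 2e_i\mu_i\le 2Dm$ directly.
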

\begin{proof}
Factor $Q=\prod_iP_i^{w_i}$ into distinct monic irreducible integer polynomials. The integer constant terms of the factors multiply, with multiplicity, to $\pm1$, hence each is $\pm1$. Put $e_i=\deg P_i$ and $\mu_i=m(P_i)\ge0$. Then
\[
\sum_iw_ie_i=D,\qquad\sum_iw_i\mu_i=m.
\]
For noncyclotomic factors combine Lemmas~\ref{lem:factor} and~\ref{lem:discriminant}. Cyclotomic factors have $\mu_i=0$ and their nonzero resultants are integers of absolute value at least $1$, so they contribute nonnegatively and require no $abc$ application. Bound the accumulated errors by
\[
\sum_iw_ie_i\log e_i\le D\log(2D),\qquad
\sum_i2w_i(e_i-1)\mu_i\le2Dm,
\]
and $\sum_iw_ic_\eta e_i/(1+\eta)\le c_\eta D$. Multiplicativity of the resultant gives~\eqref{eq:resultant}. This argument permits reducibility and repeated roots.
\end{proof}

\section{From a resultant to one value: no symmetry required}
\begin{lemma}[Elementary transfer]\label{lem:transfer}
Let $Q$ be monic of degree $D$, and let $z^n=1$. Then
\begin{equation}\label{eq:transfer}
C_n(Q)\le n^D|Q(z)|M(Q)^{n-1}.
\end{equation}
\end{lemma}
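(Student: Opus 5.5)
The plan is to rewrite the cyclic resultant as a product of values of $Q$ over all $n$th roots of unity. After that, one factor is exactly $|Q(z)|$, and the remaining $n-1$ factors are bounded root by root using an explicit quotient polynomial. No symmetry, irreducibility or distinctness of roots is needed.

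\emph{Step 1: factor over the $n$th roots of unity.} Let $\rho_1,\ldots,\rho_D$ be the roots of $Q$, counted with multiplicity. Use the factorization $X^n-1=\prod_{\zeta^n=1}(X-\zeta)$ and the monic identity $Q(\zeta)=\prod_i(\zeta-\rho_i)$. Then
\[
C_n(Q)=\prod_{i=1}^D\prod_{\zeta^n=1}|\rho_i-\zeta|
=\prod_{\zeta^n=1}|Q(\zeta)|
=|Q(z)|\prod_{\zeta^n=1,\ \zeta\ne z}|Q(\zeta)|.
\]
If $Q(z)=0$, then $C_n(Q)=0$ and \eqref{eq:transfer} holds trivially. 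So the remaining task is to show
\[
\prod_{\zeta\ne z}|Q(\zeta)|\le n^D M(Q)^{n-1}.
\]

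\emph{Step 2: regroup by roots of $Q$ and use the quotient polynomial.} Exchanging the order of the products gives
\[
\prod_{\zeta\ne z}|Q(\zeta)|=\prod_{i=1}^D\Bigl|\prod_{\zeta\ne z}(\rho_i-\zeta)\Bigr|=\prod_{i=1}^D|R_z(\rho_i)|,
\]
where
\[
R_z(X)=\frac{X^n-1}{X-z}=\sum_{j=0}^{n-1}z^{\,n-1-j}X^j .
\]
The displayed formula for $R_z$ uses $z^n=1$. The polynomial $R_z$ has $n$ coefficients, each of modulus $1$. Hence $|R_z(\rho)|\le n\max\{1,|\rho|\}^{n-1}$ for every $\rho\in\C$.

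\emph{Step 3: multiply.} Taking the product over $i$ gives $n^D\prod_i\max\{1,|\rho_i|\}^{n-1}=n^D M(Q)^{n-1}$, which is \eqref{eq:transfer}.

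The only point needing care is the crude estimate in Step 2. The naive bound $|\rho-\zeta|\le2\max\{1,|\rho|\}$ would cost a factor $2^{D(n-1)}$, which is exponential in $n$ and useless for the application to Theorem~\ref{thm:main}. Passing through the explicit quotient $R_z$ reduces this loss to the factor $n^D$.
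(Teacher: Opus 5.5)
Your proof is correct and is essentially the paper's argument. The paper writes $\rho^n-1=(\rho-z)\sum_{j=0}^{n-1}\rho^{n-1-j}z^j$ for each root and bounds the sum by $n\max(1,|\rho|)^{n-1}$, which is exactly your identity $|\rho_i^n-1|=|\rho_i-z|\,|R_z(\rho_i)|$; your detour through $\prod_{\zeta^n=1}|Q(\zeta)|$ and the separate case $Q(z)=0$ are harmless but unnecessary.
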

\begin{proof}
For each root $\rho$ of $Q$, the identity
\[
\rho^n-1=\rho^n-z^n=(\rho-z)\sum_{j=0}^{n-1}\rho^{n-1-j}z^j
\]
gives
$|\rho^n-1|\le n|\rho-z|\max(1,|\rho|)^{n-1}$.
Multiply over all roots. Monicity gives $\prod|\rho-z|=|Q(z)|$ and the other product is $M(Q)^{n-1}$. Neither reciprocity nor a unit-circle condition on $\rho$ is used.
\end{proof}

\begin{corollary}\label{cor:value}
Under Conjecture~\ref{conj:uabc}, if $Q\in\Z[X]$ is monic, $Q(0)=\pm1$, $D=\deg Q\ge1$, $M(Q)\le\Lambda$, and $C_n(Q)\ne0$, then every $z^n=1$ satisfies
\begin{equation}\label{eq:value}
\log|Q(z)|\ge-\frac{\eta}{1+\eta}n\log\Lambda
-D\bigl(\log n+\log(2D)+2\log\Lambda+c_\eta\bigr).
\end{equation}
\end{corollary}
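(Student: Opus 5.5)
Combine the lower bound of Proposition~\ref{prop:resultant} with the upper bound of Lemma~\ref{lem:transfer}. Since $C_n(Q)\ne0$, both results apply to the same nonzero quantity $\log C_n(Q)$. In particular $Q(z)\ne0$, because a root of $Q$ equal to an $n$th root of unity would make the resultant vanish, so the logarithm in \eqref{eq:value} is finite.

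Taking logarithms in \eqref{eq:transfer} gives
\[
\log|Q(z)|\ge\log C_n(Q)-D\log n-(n-1)m,
\]
with $m=m(Q)$. Inserting \eqref{eq:resultant} yields
\[
\log|Q(z)|\ge\frac{nm}{1+\eta}-(n-1)m-D\log n-D\log(2D)-2Dm-c_\eta D.
\]

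The remaining step is to collect the terms in $m$ and replace $m$ by $\log\Lambda$ where the sign requires it. Because $m\ge0$, we have $\frac{nm}{1+\eta}-(n-1)m\ge\frac{nm}{1+\eta}-nm=-\frac{\eta}{1+\eta}nm$, and this is at least $-\frac{\eta}{1+\eta}n\log\Lambda$ since $m\le\log\Lambda$. Similarly $-2Dm\ge-2D\log\Lambda$. Note that $\Lambda\ge M(Q)\ge1$, so $\log\Lambda\ge0$ and these replacements move in the correct direction. Collecting the $D$-terms gives exactly \eqref{eq:value}.

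There is no real obstacle here; the only points needing care are the sign bookkeeping and the check that discarding the $+m$ coming from the $n-1$ exponent is harmless, which holds because $m\ge0$.
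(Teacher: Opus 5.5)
Your proposal is correct and matches the paper's proof: take logarithms in \eqref{eq:transfer}, insert \eqref{eq:resultant}, rewrite $\frac{nm}{1+\eta}-(n-1)m$ as $-\frac{\eta}{1+\eta}nm+m$, drop the nonnegative $m$, and bound the remaining occurrences of $m$ by $\log\Lambda$. You also note that $C_n(Q)\ne0$ forces $Q(z)\ne0$, so the logarithm is finite; the paper leaves this point implicit.
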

\begin{proof}
Subtract $D\log n+(n-1)m(Q)$ from the lower bound~\eqref{eq:resultant} using~\eqref{eq:transfer}. The leading term is
$-\eta nm(Q)/(1+\eta)+m(Q)$. Drop the nonnegative last term and use $m(Q)\le\log\Lambda$.
\end{proof}
In particular, for a family with bounded $M(Q)$, nonzero $C_n(Q)$, and
$D\log(2nD)=o(n)$, this gives $\log|Q(z)|\ge-o(n)$ uniformly. To justify the quantifiers, first fix $\eta$ sufficiently small; only afterwards let $n$ grow. No control on $c_\eta$ is needed for this order of limits.

\section{Compression of a fixed number of exponents}
\begin{lemma}\label{lem:compression}
Fix $k\ge2$ and let $p>k$ be prime. For any distinct $a_1,\ldots,a_k\in\Fp$, there are a monic polynomial
\[
Q(X)=\sum_{j=1}^kX^{b_j}\in\Z[X],\qquad
0=\min b_j<\max b_j=D,
\]
with distinct $b_j$, and a primitive $p$th root $z$, such that
\begin{equation}\label{eq:compressed-value}
\left|\sum_{j=1}^k\zeta_p^{a_j}\right|=|Q(z)|,
\qquad D=O_k\bigl(p^{1-1/(k-1)}\bigr).
\end{equation}
Moreover $Q(0)=1$, $Q(1)=k$, and $M(Q)\le k$.
\end{lemma}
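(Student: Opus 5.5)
The plan is simultaneous Dirichlet approximation on the differences of exponents, followed by a Galois conjugation that turns the compressed exponents into small integers. Put $d_j=a_j-a_1$ for $j=2,\ldots,k$. These are $k-1$ nonzero residues modulo $p$. Multiplying the whole sum by $\zeta_p^{-a_1}$ leaves the modulus unchanged, so it suffices to treat $\sum_j\zeta_p^{d_j}$ with $d_1=0$.

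The key step is to find $t\in\{1,\ldots,p-1\}$ such that every residue $td_j \bmod p$ has a representative $e_j$ with $|e_j|\le T$, where $T\asymp p^{1-1/(k-1)}$. I would get this by pigeonhole on the vectors $(td_2,\ldots,td_k)\bmod p$ for $t=0,\ldots,p-1$. Split $[0,p)^{k-1}$ into $m^{k-1}$ boxes of side $p/m$, with $m=\lceil p^{1/(k-1)}\rceil-1$ chosen so that $m^{k-1}<p$. Then two values $t\ne t'$ land in the same box, and their difference $t_0=t-t'\not\equiv0$ satisfies $|t_0d_j \bmod p|\le p/m=O_k(p^{1-1/(k-1)})$ in centered representatives. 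For $k=2$ this degenerates, since the bound is $O(1)$: simply take $t_0=d_2^{-1}$, which gives $e_2=1$, $D=1$, and $Q=1+X$. I expect the main obstacle to be getting the exponent count right, namely that $k-1$ free differences give the exponent $1/(k-1)$ and not $1/k$. Using differences rather than the $a_j$ themselves is exactly what saves one dimension.

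Next, let $\sigma$ be the automorphism $\zeta_p\mapsto\zeta_p^{t_0}$, which exists because $t_0$ is invertible modulo $p$. Then $\sum_j\zeta_p^{d_j}=\sigma^{-1}\bigl(\sum_j\zeta_p^{e_j}\bigr)$. To state this as an equality of moduli at a root of unity, evaluate directly instead: set $z=\zeta_p^{s}$ with $s\equiv t_0^{-1}\pmod p$. Then $z$ is a primitive $p$th root and $z^{e_j}=\zeta_p^{d_j}$. Shift by $e_{\min}=\min_je_j$, putting $b_j=e_j-e_{\min}\in[0,2T]$. This gives $\sum_j\zeta_p^{d_j}=z^{e_{\min}}\sum_jz^{b_j}$, and the prefactor has modulus $1$.

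Finally, check the listed properties. The $b_j$ are distinct because the $d_j$ are distinct modulo $p$ and the $e_j$ represent distinct residues. Since $\min b_j=0$, we get $Q(0)=1$. Also $D=\max b_j\ge1$ because $k\ge2$, and $D\le2T=O_k(p^{1-1/(k-1)})$. With $Q=\sum_jX^{b_j}$, the leading coefficient is $1$, so $Q$ is monic, and $Q(1)=k$. The bound $M(Q)\le k$ follows from the coefficient-length bound $M(Q)\le\sum_j|q_j|$ recorded in Section~\ref{sec:abc}.
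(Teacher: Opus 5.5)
Your proposal is correct and follows the paper's proof essentially step for step. You translate by $a_1$, pigeonhole the $p$ dilates $(qd_2,\ldots,qd_k)\bmod p$ into fewer than $p$ boxes to get a dilation with all $td_j$ of size $O_k(p^{1-1/(k-1)})$, evaluate at $z=\zeta_p^{t^{-1}}$, and shift by the minimum exponent. Your box count $m=\lceil p^{1/(k-1)}\rceil-1$ (instead of the paper's $\lfloor (p-1)^{1/(k-1)}\rfloor$) and your separate treatment of $k=2$ are harmless variations, since the same pigeonhole argument already covers $k=2$.
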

\begin{proof}
Translate by $a_1$ and write $d_j=a_j-a_1$, so $d_1=0$. Let
\[
s=\left\lfloor(p-1)^{1/(k-1)}\right\rfloor,\qquad H=\lceil p/s\rceil.
\]
Divide each coordinate interval $\{0,\ldots,p-1\}$ into $s$ consecutive intervals of at most $H$ integers. The $p$ vectors
\[
(q d_2,\ldots,q d_k)\pmod p,\qquad 0\le q<p,
\]
lie in $s^{k-1}\le p-1$ boxes. Two vectors lie in the same box. Their parameter difference $t\in\Fp^\times$ has representatives $h_j$ of $td_j$ satisfying $|h_j|\le H$; put $h_1=0$. The $h_j$ are distinct, since multiplication by $t$ is invertible modulo $p$ and the $d_j$ are distinct. Set $h_*=\min_jh_j$, $b_j=h_j-h_*$, and $z=\zeta_p^{t^{-1}}$, where the inverse is taken modulo $p$. Then
\[
\sum_j\zeta_p^{a_j}=\zeta_p^{a_1}z^{h_*}\sum_jz^{b_j}.
\]
The prefactor has modulus $1$, $D\le2H$, and for fixed $k$,
$H=O_k(p^{1-1/(k-1)})$. The least and greatest exponents each occur once, so the constant and leading coefficients of $Q$ are both $1$. Its coefficient length is $k$.
\end{proof}

\begin{proof}[Proof of Theorem~\ref{thm:main}]
Use Lemma~\ref{lem:compression}. For fixed $k$ and all sufficiently large $p$, $D<p-1$. No primitive $p$th root can be a root of $Q$, because its minimal polynomial has degree $p-1$. Also $Q(1)=k\ne0$. Therefore $C_p(Q)\ne0$, including for any cyclotomic factors of $Q$. Corollary~\ref{cor:value} gives
\begin{equation}\label{eq:quantitative-main}
\log\left|\sum_j\zeta_p^{a_j}\right|
\ge-\frac{\eta}{1+\eta}p\log k
-O_{\eta,k}\bigl(p^{1-1/(k-1)}\log p\bigr).
\end{equation}
Given $\varepsilon>0$, choose fixed $\eta>0$ such that
$\eta\log k/(1+\eta)<\varepsilon/2$. The second term is $o(p)$ with a constant independent of the exponents. For sufficiently large $p$ it is at most $\varepsilon p/2$. This proves the theorem.
\end{proof}

\section{The symmetric five-term case and the aggregate norm target}\label{sec:fiveterm}
Return to the family $\alpha_p(a,b)$ of Section~\ref{sec:results}, with $p>5$ prime and $(a,b)$ admissible. Theorem~\ref{thm:main} gives $|\alpha_p(a,b)|\ge\exp(-\varepsilon p)$ uniformly for every fixed $\varepsilon>0$ and sufficiently large primes; that is, Conjecture~\ref{conj:B} holds under Conjecture~\ref{conj:uabc}.

Symmetry improves the compression degree, although it is not needed for the conditional conclusion. Applying the two-coordinate pigeonhole argument to $(a,b)$ gives representatives $ta,tb$ of size $O(\sqrt p)$. Replace them by their absolute values, reorder to $0<u<v$, and divide by their integer gcd, absorbing that gcd into the nonzero dilation $\lambda$. For large $p$ this gcd is less than $p$. We obtain $a\equiv\pm\lambda u$, $b\equiv\pm\lambda v$ and
\[
Q_{u,v}(X)=X^{2v}+X^{v+u}+X^v+X^{v-u}+1,
\qquad\gcd(u,v)=1,\quad v=O(\sqrt p),
\]
with $|\alpha_p(a,b)|=|Q_{u,v}(\zeta_p^{\lambda})|$. Its degree is $2v$ and $M(Q_{u,v})\le5$. Thus~\eqref{eq:value} has error $O_\eta(\sqrt p\log p)$ instead of the $O_\eta(p^{3/4}\log p)$ error for an arbitrary five-element set.

The same hypothesis also settles a stronger aggregate target. For a monic $Q$ define
\[
T_n(Q)=\sum_{Q(\rho)=0,\ |\rho|=1}\logplus\frac1{|\rho^n-1|},
\qquad \logplus x=\max(0,\log x).
\]
\begin{proposition}\label{prop:aggregate}
Under Conjecture~\ref{conj:uabc}, for monic $Q\in\Z[X]$ with $Q(0)=\pm1$, degree $D\ge1$, $M(Q)\le\Lambda$, and $C_n(Q)\ne0$,
\begin{equation}\label{eq:aggregate}
T_n(Q)\le\frac{\eta}{1+\eta}n\log\Lambda
+D\bigl(\log(2D)+2\log\Lambda+c_\eta+\log2\bigr).
\end{equation}
In particular $T_p(Q_{u,v})=o(p)$ uniformly for $v=O(\sqrt p)$.
\end{proposition}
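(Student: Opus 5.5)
We derive \eqref{eq:aggregate} from the resultant lower bound \eqref{eq:resultant} of Proposition~\ref{prop:resultant}, by splitting $\log C_n(Q)$ over the roots of $Q$ according to their position relative to the unit circle. Since $C_n(Q)\ne0$, no root satisfies $\rho^n=1$, so every term below is finite. Writing $m=m(Q)$, we have
\[
\log C_n(Q)=\sum_{|\rho|>1}\log|\rho^n-1|+\sum_{|\rho|<1}\log|\rho^n-1|+\sum_{|\rho|=1}\log|\rho^n-1|.
\]
We bound each group from above and compare the total with \eqref{eq:resultant}.

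For the roots off the circle we use $|\rho^n-1|\le 2\max(1,|\rho|)^n$. For $|\rho|<1$ this gives $|\rho^n-1|\le 2$. For $|\rho|>1$ it gives $|\rho^n-1|\le 2|\rho|^n$. For unimodular roots we write $\log|\rho^n-1|=\log|\rho^n-1|\cdot\mathbf 1_{\ge1}-\logplus(1/|\rho^n-1|)$. The first part is at most $\log 2$ because $|\rho^n-1|\le2$.

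Summing these bounds with multiplicity yields
\[
\log C_n(Q)\le nm+D\log2-T_n(Q).
\]
Combining this with \eqref{eq:resultant} gives
\[
T_n(Q)\le nm-\frac{nm}{1+\eta}+D\log(2D)+2Dm+c_\eta D+D\log2.
\]
The leading difference equals $\frac{\eta}{1+\eta}nm$. Using $m\le\log\Lambda$, together with the fact that all coefficients are nonnegative, gives exactly \eqref{eq:aggregate}.

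For the final assertion we take $Q=Q_{u,v}$ and $n=p$. This polynomial is monic with constant term $1$, has $D=2v=O(\sqrt p)$, and satisfies $\Lambda=5$ by the coefficient-length bound. We also need $C_p(Q_{u,v})\ne0$. For large $p$ we have $2v<p-1$, so no primitive $p$th root of unity is a root of $Q_{u,v}$, and $Q_{u,v}(1)=5\ne0$.

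The quantifiers are handled as in the remark after Corollary~\ref{cor:value}. Given $\varepsilon>0$, we first fix $\eta$ with $\eta\log5/(1+\eta)<\varepsilon/2$. The remaining term is $O_\eta(\sqrt p\log p)<\varepsilon p/2$ for large $p$, uniformly in $u,v$. The only step needing care is the bookkeeping for roots off the circle, where the factor $2$ must not be lost for roots of modulus slightly above $1$. The bound $|\rho^n-1|\le2|\rho|^n$ handles this uniformly, so no genuine obstacle remains.
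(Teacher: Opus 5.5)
Your proof is correct and follows the paper's argument. You bound every factor by $|\rho^n-1|\le2\max(1,|\rho|)^n$, split off $\log^+(1/|\rho^n-1|)$ at unimodular roots to get $T_n(Q)\le nm(Q)+D\log2-\log C_n(Q)$, insert \eqref{eq:resultant}, and fix $\eta$ before letting $p$ grow. Your explicit check that $C_p(Q_{u,v})\ne0$ (via $2v<p-1$ and $Q_{u,v}(1)=5$) fills in a step the paper leaves implicit by referring to the proof of Theorem~\ref{thm:main}.
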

\begin{proof}
For every root $\rho$, $|\rho^n-1|\le2\max(1,|\rho|)^n$. In $\log C_n(Q)$ remove the negative logarithms from unit-circle roots. Their total is $-T_n(Q)$; all remaining logarithms sum to at most $nm(Q)+D\log2$. Hence
\[
T_n(Q)\le nm(Q)+D\log2-\log C_n(Q).
\]
Insert~\eqref{eq:resultant} and use $m(Q)\le\log\Lambda$. For the last assertion choose $\eta$ first, then let $p$ grow, as in the proof of Theorem~\ref{thm:main}.
\end{proof}

\section{Limitations and future work}\label{sec:limits}

\paragraph{What is and is not proved.}
Theorem~\ref{thm:D} bounds the number of exceptional embeddings at the quasipolynomial scale, unconditionally, but leaves those embeddings uncontrolled. Theorem~\ref{thm:main} controls every embedding, but only at the subexponential scale and only under an arithmetic hypothesis. As noted in Section~\ref{sec:results}, neither Conjecture~\ref{conj:A} nor Conjecture~\ref{conj:B} follows unconditionally. An individual lower bound of the form $\exp\{-C(\log p)^B\}$ at every embedding in the symmetric five-term case remains unresolved by these arguments.

\paragraph{Scope and possible extensions.}
The polynomial estimates of Sections~\ref{sec:abc}--\ref{sec:fiveterm} hold for any monic integer polynomial with constant term $\pm1$, bounded Mahler measure, and nonzero cyclic resultant. The compression lemma explains why the distinct prime-order sums fit these hypotheses. Repeated terms can change the extreme coefficients, and general coefficients can produce nonunit roots. Composite orders also remove the automatic invertibility of the pigeonhole dilation. Those extensions of the conditional individual estimate require additional arguments and are not claimed. The unconditional exceptional-set estimate of Theorem~\ref{thm:D} uses a different argument and does allow composite conductors; the specific five-term symmetric family enters through the fixed curve $E$ of Section~\ref{sec:Dproof}, and the method should apply to other fixed real-analytic curves cut out by Laurent polynomials with nonvanishing real gradient, although we do not pursue this here.

\end{document}